\documentclass[11pt,reqno]{amsart}
\usepackage{xcolor}

\usepackage[utf8]{inputenc}
\usepackage[T1]{fontenc}
\usepackage{amsmath,amssymb,amsthm,mathtools}
\usepackage[shortlabels]{enumitem}
\usepackage[margin=1.05in]{geometry}
\usepackage{tikz}
\usepackage[colorlinks=true,linkcolor=blue,citecolor=blue,urlcolor=blue]{hyperref}

\allowdisplaybreaks
\numberwithin{equation}{section}

\newtheorem{theorem}{Theorem}[section]
\newtheorem{proposition}[theorem]{Proposition}
\newtheorem{lemma}[theorem]{Lemma}
\newtheorem{corollary}[theorem]{Corollary}
\theoremstyle{definition}
\newtheorem{definition}[theorem]{Definition}
\newtheorem{problem}[theorem]{Problem}
\theoremstyle{remark}
\newtheorem{remark}[theorem]{Remark}
\newtheorem{example}[theorem]{Example}

\newcommand{\R}{\mathbb R}
\newcommand{\one}{\mathbf 1}
\newcommand{\ip}[2]{\left\langle #1,#2\right\rangle}
\newcommand{\norm}[1]{\left\lVert #1\right\rVert}
\newcommand{\cE}{\mathcal E}
\newcommand{\cL}{\mathcal L}

\title[Inverse-weight Ricci flow on trees]
{Curvature Diffusion of Inverse-weight Lin--Lu--Yau Ricci Flow on Finite Trees} 
\author{Shuliang Bai}
\address{Beijing Institute of Mathematical Sciences and Applications,
Beijing, China}
\email{baishuliang@bimsa.cn}
\author{Bobo Hua}
\address{School of Mathematical Sciences, LMNS, Fudan University,
Shanghai 200433, China}
\email{bobohua@fudan.edu.cn}
\date{}
\subjclass[2020]{53C21, 05C05, 34D05, 39A12}
\keywords{Lin--Lu--Yau curvature, Ollivier Ricci flow, weighted tree, curvature heat
equation, graph Laplacian, discrete Einstein metric}

\begin{document}

\begin{abstract}
We study the inverse-weight Lin–Lu–Yau Ricci flow on finite trees, and prove the convergence of the curvature on every edge via its diffusion equation. Moreover, the limiting curvature is the
unique minimum-norm point of a polyhedron determined by the tree. We characterize possible
limits of the normalized edge weights, give criteria for their convergence, and show that different
positive initial metrics have equivalent omega-limit sets.
\end{abstract}

\maketitle

\section{Introduction}

Ollivier curvature compares the one-step probability measures based at nearby
points by optimal transport \cite{Ollivier2009}.  Lin, Lu, and Yau introduced its
idleness derivative for graphs \cite{LinLuYau2011}; the dependence of graph
Ollivier curvature on the idleness parameter was subsequently analyzed in detail
in \cite{BourneEtAl2018}.  The continuous Ricci flow on a
weighted graph was introduced in previous work \cite{BaiEtAl2024}.  In
\cite{BaiHuaLinLiu2025}, we studied the Ricci flow in the inverse-weight model on finite trees: for each edge $e$ with edge weight $w_e,$ the edge length is given by $w_e$ and the transition rate of simple random walk on $e$ is proportional to  $w_e^{-1},$ which is related to electric network model in physics and Markov chain in probability. Some normalized edge weights can tend to zero along that flow.
The limiting statement in that paper is therefore formulated on the closed simplex
and gives constant curvature on the positive support.  Recall that a caterpillar is a
tree whose non-leaf vertices form a path.  For a flat caterpillar limit, all
normalized leaf weights tend to zero.  Internal edges may also tend to zero and
separate the positive support into several components.

In the inverse-weight model, both the transition kernel and the path metric depend on the evolving edge
weights.  In the
fixed-kernel model of \cite{BaiLiuLai2026}, the transition kernel is unchanged
throughout the evolution and only the path metric varies; on a tree the resulting
flow is a linear system.  In the uniform-neighbor model of
\cite{BaiChengHua2026}, the measure at $x$ distributes its non-idle mass uniformly
among the $d_x$ neighbors and is independent of the edge lengths, which enter only
through the path metric.  In that setting the Einstein equation is equivalent to a
Perron eigenvalue problem for the Ricci matrix.  By contrast, the inverse-weight
flow studied here is nonlinear even on trees.

There are several convergence results for other curvature flows on general weighted
graphs.  Ma and Yang considered piecewise-linear curvature flows with surgery
\cite{MaYang2025}.  For graphs of girth at least six, Lin and Liu proved convergence
of a prescribed-curvature flow whose transition probabilities are proportional to
the edge weights \cite{LinLiu2026}; the corresponding prescribed-curvature Calabi
flow was studied in \cite{LiWangYuanZheng2026}.  These evolutions differ from the
metric-dependent inverse-weight flow considered here.  To our knowledge, an exact
curvature evolution equation of the form obtained below has not previously been
established for this inverse-weight flow, and its behavior on graphs with cycles
remains open.


Let $T=(V,E)$ be a finite tree, and assign a positive length $w_e$ to every
edge $e$.  These lengths determine the path metric $d_w$ on $V$.  At a vertex
$x$, put
\[
 D_x(w):=\sum_{e\ni x}\frac1{w_e}.
\]
For $\alpha\in[0,1)$, the lazy probability measure based at $x$ is given by
\[
 m_x^\alpha(x)=\alpha,
 \qquad
 m_x^\alpha(z)=(1-\alpha)\frac{w_{xz}^{-1}}{D_x(w)}
 \quad(z\sim x).
\]
This is the inverse-weight model, in which a shorter edge carries
a larger transition probability. In electric networks, the edge length $w_e$ is interpreted as the resistance of the edge, and hence $w_e^{-1}$ is its conductance.  If $e=xy$, its
Lin--Lu--Yau curvature is
\[
 \kappa_e(w)
 =\lim_{\alpha\uparrow1}
 \frac{1-W_1(m_x^\alpha,m_y^\alpha)/w_e}{1-\alpha},
\]
where $W_1$ is the Wasserstein distance associated with $d_w$.  The same edge
lengths therefore determine both the transportation cost and the transition
kernel. Thus changing an edge length affects both the transportation metric and the probability measures entering the curvature. The resulting feedback is nonlinear even on a tree.

We consider the unnormalized flow
\begin{align}\label{eq:unnormalized}
                  \dot{\widetilde w}_e
                  =-\kappa_e(\widetilde w)\widetilde w_e. 
\end{align}
For the normalized flow, we assume that
\[ w_e(0)>0\quad(e\in E),
 \qquad
 \sum_{e\in E}w_e(0)=1:\]
\begin{align}\label{eq:normalized}
 \dot w_e=-w_e\bigl(\kappa_e(w)-K(w)\bigr),
 \qquad
 K(w):=\sum_{f\in E}w_f\kappa_f(w).
\end{align}
The normalization $\sum_{e\in E}w_e(t)=1$ is preserved by
\eqref{eq:normalized}.
Curvature is unchanged when all edge lengths are multiplied by the same
positive constant.  Consequently, the normalized flow is obtained from the
unnormalized one by dividing by the total edge length.  The two flows have the
same curvatures and the same edge-length ratios; normalization merely removes
their common scale.

\subsection*{Main results}

Let
$\kappa=(\kappa_e)_{e\in E}$ be the curvature vector, and write $m=|E|$.  We use
$\dot f=df/dt$ for differentiation with respect to time.  The weighted Laplacian
$\cL(w)$ used below is defined on the edge set: two edges are
adjacent if they meet at a vertex of degree at least three, and its conductances are
given by \eqref{eq:conductance}.

\begin{theorem}\label{thm:heat}
On a finite tree, along either the unnormalized flow \eqref{eq:unnormalized} or the normalized \eqref{eq:normalized},
\begin{equation}\label{eq:heat}
                 \dot\kappa=-\cL(w)\kappa.
\end{equation}
For each positive metric, $\cL(w)$ is symmetric and positive semidefinite,
$\cL(w)\one=0$, and
\begin{equation}\label{eq:quadratic}
 \ip{z}{\cL(w)z}
 =\sum_{\{e,f\}:a_{ef}>0}a_{ef}(z_e-z_f)^2\ge0.
\end{equation}
\end{theorem}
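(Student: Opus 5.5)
The plan is to first derive a closed formula for $\kappa_e$ on a tree in the inverse-weight model, and then differentiate it directly along the flow. Throughout I write $p_{xz}:=w_{xz}^{-1}/D_x(w)$ for the transition probability from $x$ to a neighbour $z$, and $d_x$ for the degree of $x$.

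\emph{Step 1: the curvature formula.} Fix $e=xy$. On a tree, $W_1$ between two measures equals the sum over edges of the edge length times the net mass that must cross that edge, because every transport path is forced. For $\alpha$ close to $1$ this gives three contributions:
\begin{itemize}
\item each edge $xz$ with $z\neq y$ carries mass $(1-\alpha)p_{xz}$;
\item each edge $yu$ with $u\neq x$ carries mass $(1-\alpha)p_{yu}$;
\item the edge $xy$ itself carries $1-(1-\alpha)(p_{xy}+p_{yx})$.
\end{itemize}
Since $p_{xz}w_{xz}=1/D_x$, we obtain
\[
W_1=w_e\bigl(1-(1-\alpha)(p_{xy}+p_{yx})\bigr)+(1-\alpha)\Bigl(\tfrac{d_x-1}{D_x}+\tfrac{d_y-1}{D_y}\Bigr).
\]
Using $p_{xy}=1/(w_eD_x)$, this yields
\[
\kappa_e(w)=\frac{1}{w_e}\Bigl(\frac{2-d_x}{D_x(w)}+\frac{2-d_y}{D_y(w)}\Bigr).
\]
Each endpoint contributes one term of this form. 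The term vanishes when $d_x=2$. At a leaf ($d_x=1$, $D_x=w_e^{-1}$) it is the constant $1$. Only endpoints of degree at least three contribute nonconstant terms, which explains the adjacency rule in the definition of $\cL(w)$. I expect this step to be the main obstacle: one has to check carefully that the transport plan above is optimal and that it is valid for all $\alpha$ near $1$, not just asymptotically. I would argue this by the forced-flow (cut) characterization of $W_1$ on trees.

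\emph{Step 2: differentiate along the unnormalized flow.} Along \eqref{eq:unnormalized}:
\begin{itemize}
\item $\frac{d}{dt}w_f^{-1}=\kappa_f w_f^{-1}$ for every edge $f$;
\item consequently $\dot D_x=\sum_{f\ni x}\kappa_f w_f^{-1}$.
\end{itemize}
Hence, for an endpoint $x$ of $e$ with $d_x\ge3$,
\[
\frac{d}{dt}\,\frac{2-d_x}{w_eD_x}=\frac{2-d_x}{w_eD_x}\Bigl(\kappa_e-\sum_{f\ni x}p_{x,f}\kappa_f\Bigr)
=-\sum_{f\ni x,\,f\neq e}\frac{d_x-2}{w_ew_fD_x^2}\,(\kappa_e-\kappa_f),
\]
where $p_{x,f}$ denotes the transition probability from $x$ along the edge $f$.

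This suggests taking the conductance
\[
a_{ef}=\frac{d_x-2}{w_e w_f D_x(w)^2}
\]
for distinct edges $e,f$ meeting at a vertex $x$ with $d_x\ge3$, and $a_{ef}=0$ otherwise. On a tree two distinct edges share at most one vertex, so $a_{ef}$ is well defined, and it is manifestly symmetric and nonnegative. I would check that this agrees with \eqref{eq:conductance}.

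Summing the contributions of both endpoints gives
\[
\dot\kappa_e=-\sum_f a_{ef}(\kappa_e-\kappa_f),
\]
which is exactly $\dot\kappa=-\cL(w)\kappa$ with $\cL(w)=\mathrm{diag}\bigl(\sum_f a_{ef}\bigr)-(a_{ef})$.

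\emph{Step 3: the normalized flow.} The normalized flow \eqref{eq:normalized} is a time-independent rescaling of the unnormalized one, with the same time parameter. Its velocity differs from that of \eqref{eq:unnormalized} by $K(w)\,w$, which is tangent to the scaling orbit. Since $\kappa$ is homogeneous of degree $0$, the derivative of $\kappa$ in the direction $w$ vanishes. Therefore $\dot\kappa$ is the same along both flows. Alternatively, the computation in Step 2 can be repeated directly: the extra $K$-terms cancel between $w_e^{-1}$ and $D_x$.

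\emph{Step 4: properties of $\cL(w)$.} These follow from the graph-Laplacian form of $\cL(w)$:
\begin{itemize}
\item symmetry follows from $a_{ef}=a_{fe}$;
\item $\cL(w)\one=0$ holds because each row of $\cL(w)$ sums to zero;
\item the quadratic form identity \eqref{eq:quadratic} follows from the standard expansion $\sum_e z_e\sum_f a_{ef}(z_e-z_f)=\sum_{\{e,f\}}a_{ef}(z_e-z_f)^2$;
\item positive semidefiniteness follows from $a_{ef}\ge0$.
\end{itemize}
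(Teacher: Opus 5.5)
Your proposal is correct and follows essentially the paper's proof: differentiate each endpoint contribution $(2-d_x)/(w_eD_x)$ using $\dot D_x/D_x=\sum_{f\ni x}p_{x,f}\kappa_f$, read off the conductances $a_{ef}=(d_x-2)/(D_x^2w_ew_f)$, note that the normalized flow gives the same $\dot\kappa$ (by cancellation of the $K$-terms, or equivalently by scale invariance of $\kappa$), and finish with the standard graph-Laplacian identities. The only difference is that you rederive the tree curvature formula from the cut formula for $W_1$ on a tree, which is exact and gives your expression for every $\alpha\ge 1/2$, so your optimality concern disappears; the paper instead quotes this formula from earlier work as Proposition~\ref{prop:treeformula}.
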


A related heat equation for Ollivier curvature and general graph Laplacians was studied in \cite{MunchWojciechowski2019}.  Here the Laplacian evolves with the metric, and no uniform lower bound for its positive conductances is assumed.  Nevertheless, every edge curvature converges, and the limits determine the exponential rates of the edge weights.

\begin{theorem}\label{thm:kappalimit}
For every $e\in E$, the limit
\begin{equation*}
                    c_e:=\lim_{t\to\infty}\kappa_e(t)
\end{equation*}
exists.  Put
\begin{equation*}
       c_*:=\min_{e\in E}c_e,
       \qquad E_*:=\{e\in E:c_e=c_*\}.
\end{equation*}
If $\widetilde w$ denotes the unnormalized solution and $w$ its normalization, then
\begin{equation}\label{eq:exponentialrates}
 \lim_{t\to\infty}\frac1t\log\widetilde w_e(t)=-c_e,
 \qquad
 \lim_{t\to\infty}\frac1t\log w_e(t)=c_*-c_e.
\end{equation}
Every edge outside $E_*$ therefore has exponentially vanishing
normalized weight.
\end{theorem}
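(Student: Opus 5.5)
Throughout, $\kappa(t)$ solves the heat equation \eqref{eq:heat} of Theorem~\ref{thm:heat}, and the plan is to get convergence of $\kappa$ from monotone Lyapunov quantities. By \eqref{eq:quadratic}, $\frac{d}{dt}\norm{\kappa}^2=-2\ip{\kappa}{\cL(w)\kappa}\le0$, so $\norm{\kappa(t)}$ is nonincreasing and $\kappa$ is bounded. Since $\cL(w)$ is symmetric and $\cL(w)\one=0$, we also have $\frac{d}{dt}\ip{\one}{\kappa}=0$. More generally, for any convex $\phi$ one gets $\frac{d}{dt}\sum_e\phi(\kappa_e)=-\sum a_{ef}(\phi'(\kappa_e)-\phi'(\kappa_f))(\kappa_e-\kappa_f)\le0$. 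In particular $\max_e\kappa_e$ is nonincreasing and $\min_e\kappa_e$ is nondecreasing (maximum principle). Taking $\phi(s)=(s-\lambda)_+$ and $(\lambda-s)_+$ for every level $\lambda$ gives a family of monotone functionals. These are all the level-set quantities of the ``distribution'' of $\kappa$, so they do not yet pin down which edge carries which value.

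For convergence of each individual edge I would combine these functionals with an integrability estimate. First, $\int_0^\infty\ip{\kappa}{\cL\kappa}\,dt<\infty$, so $\int_0^\infty a_{ef}(\kappa_e-\kappa_f)^2\,dt<\infty$ for each adjacent pair. Second, $|\dot\kappa_e|\le\sum_f a_{ef}|\kappa_e-\kappa_f|$. The problem is that the conductances $a_{ef}(w)$ may blow up or degenerate, since no uniform bound is assumed. So I would use the explicit form \eqref{eq:conductance} together with the tree structure to bound $\int_0^\infty|\dot\kappa_e|\,dt$. Expressing $a_{ef}$ through the weights, I expect $a_{ef}$ to be bounded above by a constant times quantities like $D_x^{-1}$-ratios, which are bounded on a tree by local finiteness. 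If each $a_{ef}$ is bounded, Cauchy--Schwarz alone gives only $\int|\dot\kappa_e|^2<\infty$, which is not enough. I would therefore argue via the $\omega$-limit set instead. Every limit point $\kappa^\infty$ satisfies $\norm{\kappa^\infty}=\lim\norm{\kappa}$ and has the same values of all the monotone functionals $\sum_e\phi(\kappa_e)$. Hence all limit points share the same multiset of values. Since $\dot\kappa$ is bounded, the $\omega$-limit set is connected. A connected subset of the finite set of permutations of one fixed vector is a single point, so $\kappa$ converges. This permutation argument is the step I expect to be the main obstacle, because it requires the a priori bound on $\dot\kappa$, i.e.\ an upper bound on the conductances. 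If that fails, I would first try time reparametrization: use $\int|\dot\kappa_e|<\infty$ on intervals where some $a_{ef}$ is large, exploiting that large conductance forces $\kappa_e-\kappa_f$ to be small in $L^2$.

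Once $c_e=\lim\kappa_e$ exists, the rates \eqref{eq:exponentialrates} follow by integration. From \eqref{eq:unnormalized}, $\log\widetilde w_e(t)=\log\widetilde w_e(0)-\int_0^t\kappa_e$, and by Cesàro $\frac1t\log\widetilde w_e\to-c_e$. For the normalized flow, $w_e=\widetilde w_e/S$ with $S=\sum_f\widetilde w_f$. The largest rate among finitely many exponentials dominates, so $\frac1t\log S\to\max_f(-c_f)=-c_*$. Subtracting gives $\frac1t\log w_e\to c_*-c_e$. For $e\notin E_*$ this limit is negative, so $w_e\to0$ exponentially, which gives the final assertion.
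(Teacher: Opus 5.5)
Your argument is correct and is essentially the paper's proof. The paper shows that the sums $S_i(t)$ of the $i$ smallest curvatures are nondecreasing; this is a dual form of your convex functionals, since $\sum_e(\lambda-\kappa_e)_+=\max_{0\le i\le m}(i\lambda-S_i)$ with $S_0=0$. From this it deduces that the sorted curvature values converge, concludes via connectedness of omega-limit sets inside a finite set (Lemma~\ref{lem:orderedlimits}), and obtains the rates exactly as you do.

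The step you flag as the main obstacle is not one. The omega-limit set of any bounded continuous curve is connected, being the nested intersection of the compact connected sets $\overline{\kappa([T,\infty))}$, so no bound on $\dot\kappa$ is needed. A bound holds anyway, since $a_{ef}\le(d_x-2)/4$ by AM--GM. The integrability and time-reparametrization detours can therefore be dropped.
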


If $w^*$ is an omega-limit point of the normalized trajectory, put
\[
 E^+(w^*):=\{e\in E:w_e^*>0\}.
\]
Theorem~\ref{thm:kappalimit} gives
\begin{equation}\label{eq:supportminimal}
 E^+(w^*)\subseteq E_*,
 \qquad
 \lim_{t\to\infty}\kappa_e(t)=c_*
 \quad(e\in E^+(w^*)),
\end{equation}
and
\[
 \sum_{e\notin E_*}w_e(t)\longrightarrow0.
\]
Thus any nonconvergent normalized edge weight must belong to $E_*$.
If $E_*=\{e_*\}$, then $w_{e_*}(t)\to1$ and all other normalized
weights tend to zero.

For a vertex $x$, let $E(x)$ be the set of edges incident to $x$.  For
$A\subseteq E$, set
\begin{equation*}
 \beta(A):=\sum_{\substack{x\in V\\E(x)\subseteq A}}(2-d_x),
 \qquad
 \lambda(A):=\frac{\beta(A)}{|A|}\quad(A\ne\varnothing).
\end{equation*}
For $z\in\R^E$, write $z(A)=\sum_{e\in A}z_e$, and define
\begin{equation*}
 B(\beta):=
 \left\{z\in\R^E:z(A)\le\beta(A)\text{ for every }A\subseteq E,
                         \ z(E)=\beta(E)\right\}.
\end{equation*}
The next result identifies the limiting curvature with a canonical
point of $B(\beta)$.

\begin{theorem}\label{thm:combinatorial}
The function $\beta$ is submodular, and the limiting curvature vector
$c=(c_e)_{e\in E}$ is the unique solution of
\begin{equation}\label{eq:minimumnormbase}
             \min_{z\in B(\beta)}\frac12\sum_{e\in E}z_e^2.
\end{equation}

Let $\alpha_1<\cdots<\alpha_r$ be the distinct entries of $c$, and put
\[
 A_0=\varnothing,
 \qquad A_j:=\{e\in E:c_e\le\alpha_j\}\quad(1\le j\le r).
\]
Then $A_r=E$ and
\begin{equation}\label{eq:tightlevels}
                         \beta(A_j)=\sum_{e\in A_j}c_e.
\end{equation}
Moreover, for $0\le j<r$,
\begin{equation}\label{eq:principalchain}
 \alpha_{j+1}
 =\min_{A_j\subsetneq A\subseteq E}
   \frac{\beta(A)-\beta(A_j)}{|A|-|A_j|},
\end{equation}
and $A_{j+1}$ is the unique largest set attaining this minimum.Thus \eqref{eq:principalchain} characterizes the limiting curvature
vector recursively in terms of the tree, without reference to the
initial metric.
In particular,
\begin{equation}\label{eq:cstarcombinatorial}
 c_*=\min_{\varnothing\ne A\subseteq E}\lambda(A).
\end{equation}
The set $E_*$ is the unique largest set attaining this minimum; equivalently,
\begin{equation}\label{eq:Estarunion}
 E_*=\bigcup\{A\subseteq E:A\ne\varnothing,
                         \ \beta(A)=c_*|A|\}.
\end{equation}
In particular, $\beta(E_*)=c_*|E_*|$.  The sets attaining the minimum are
closed under unions and under nonempty intersections.
\end{theorem}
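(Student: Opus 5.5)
The proof splits into a purely combinatorial part (submodularity of $\beta$, and the structure of minimum-norm bases of a submodular polyhedron) and an analytic part. The analytic part places the limit $c$ of Theorem~\ref{thm:kappalimit} in $B(\beta)$ and shows that its level sets are tight. Once these are in hand, Fujishige's optimality criterion for the minimum-norm base gives \eqref{eq:minimumnormbase}, and the remaining statements are standard consequences of the theory of principal partitions.

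\emph{Submodularity.} Write $\beta(A)=\sum_x(2-d_x)\,\one[E(x)\subseteq A]$.
\begin{itemize}
\item For a leaf $x$ with $E(x)=\{e\}$, the indicator $\one[E(x)\subseteq A]=\one[e\in A]$ is modular.
\item For $d_x\ge2$, the map $A\mapsto\one[E(x)\subseteq A]$ is supermodular, since $\one[E(x)\subseteq A\cup B]\ge\max$ and $\one[E(x)\subseteq A\cap B]=\min$ of the two indicators. Its coefficient $2-d_x$ is nonpositive, so the term is submodular.
\end{itemize}
Summing over $x$ gives submodularity, with $\beta(\varnothing)=0$. Note also that $\beta(E)=2|V|-2|E|=2$.

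\emph{Placing $c$ in $B(\beta)$ with tight level sets.} I will rely on the explicit tree formula for $\kappa_e$ in the inverse-weight model. I expect it to split as a sum of vertex contributions, $\kappa_{xy}=\phi_x(xy;w)+\phi_y(xy;w)$, with $\sum_{e\ni x}\phi_x(e;w)=2-d_x$ at every vertex and every positive $w$.
\begin{itemize}
\item Summing this identity gives $\kappa(E)\equiv 2=\beta(E)$. This is consistent with \eqref{eq:heat} and $\one^{T}\cL(w)=0$, so $c(E)=\beta(E)$.
\item For a general $A$, split $\kappa(A)=\sum_x\sum_{e\in A\cap E(x)}\phi_x(e;w)$. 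Vertices with $E(x)\subseteq A$ contribute exactly $2-d_x$. It then suffices to show that a vertex with $E(x)\not\subseteq A$ contributes a partial sum that is asymptotically $\le 0$, giving $c(A)\le\beta(A)$.
\item For the level sets $A_j$, use the rates \eqref{eq:exponentialrates}. Every edge outside $A_j$ has $c_e>\alpha_j$, so its weight $\widetilde w_e$ is exponentially smaller than those of the edges in $A_j$. In the inverse-weight model it therefore carries almost all the transition probability at any shared vertex $x$. I expect this to force $\sum_{e\in A_j\cap E(x)}\phi_x(e;w(t))\to0$ at such boundary vertices, which gives the tightness \eqref{eq:tightlevels}.
\end{itemize}

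\emph{Main obstacle.} The inequality $c(A)\le\beta(A)$ for arbitrary $A$ is where I expect the difficulty. A sign for the boundary partial sums at every time may be false, because the boundary edges at a vertex can have comparable weights. In that case I would argue with limits along the flow instead of pointwise in time.
\begin{itemize}
\item Pass to a subsequence along which all ratios $\widetilde w_e/\widetilde w_f$ converge in $[0,\infty]$.
\item Compute the limiting $\phi_x$ from the transport plan at $x$, which on a tree is explicit.
\item Show the limiting boundary partial sum is $\le 0$, using $d_x\ge2$ at a boundary vertex, since a leaf's single edge lies either in $A$ or not.
\end{itemize}
If a direct argument fails, an alternative is to use the heat equation \eqref{eq:heat}. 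From $\frac{d}{dt}\tfrac12|\kappa|^2=-\ip{\kappa}{\cL\kappa}\le0$ and the convergence of $\kappa$, one would try to obtain Lyapunov-type information, but I would try the local computation first.

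\emph{Conclusion.} Now $c\in B(\beta)$ and every level set $A_j$ is tight. Fujishige's theorem says a base $z$ is the minimum-norm base if and only if each set $\{e:z_e\le\alpha\}$ is tight. Hence $c$ is the unique minimizer of \eqref{eq:minimumnormbase}, unique by strict convexity.

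The remaining statements follow from standard results on minimum-norm bases and principal partitions; only the specialisation to our $\beta$ is needed.
\begin{itemize}
\item \emph{The chain \eqref{eq:principalchain}.} The contraction $A\mapsto\beta(A_j\cup A)-\beta(A_j)$ is again submodular. Its minimum-norm base is $c|_{E\setminus A_j}$, and the smallest value of that base equals the minimum ratio in \eqref{eq:principalchain}.
\item \emph{Uniqueness of the maximizer.} The sets attaining the ratio minimum satisfy $\beta(A)-\alpha|A|=\min$. Since $A\mapsto\beta(A)-\alpha|A|$ is submodular, its minimizers form a lattice, closed under unions and intersections. Hence a unique largest minimizer exists, and it equals $A_{j+1}$ by tightness together with $c\in B(\beta)$.
\item \emph{The case $j=0$.} This gives \eqref{eq:cstarcombinatorial} and \eqref{eq:Estarunion}, and $\beta(E_*)=c_*|E_*|$. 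Closure of the minimizing sets under nonempty intersections comes from the same lattice property.
\end{itemize}
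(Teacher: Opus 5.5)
Your outline follows the paper's proof. It uses the same supermodularity of $A\mapsto\one[E(x)\subseteq A]$, the same grouping of $\sum_{e\in A}\kappa_e$ by vertices, and the same use of \eqref{eq:exponentialrates} to kill partial contributions at boundary vertices of $A_j$.

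\textbf{The step you leave open is immediate.} The step you call the main obstacle is not one, and your worry about comparable weights is unfounded. By Proposition~\ref{prop:treeformula},
\[
\phi_x(e;w)=\frac{2-d_x}{w_eD_x}=(2-d_x)\,\frac{w_e^{-1}}{D_x},
\]
which is $2-d_x$ times the non-idle transition probability from $x$ across $e$. So every term individually has the sign of $2-d_x$. At a vertex with $\varnothing\ne A\cap E(x)\ne E(x)$ you have $d_x\ge2$, as you noted. Hence the partial sum $(2-d_x)\sum_{e\in A\cap E(x)}w_e^{-1}/D_x$ lies in $[2-d_x,0]$ whatever the relative sizes of the weights. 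This gives $\sum_{e\in A}\kappa_e\le\beta(A)$ for every positive metric, so at every time along the flow, and $c(A)\le\beta(A)$ follows by letting $t\to\infty$. The subsequence, transport-plan and Lyapunov fallbacks are unnecessary. The same factorization turns your expectation in the tightness step into a proof: for $e\in A_j\cap E(x)$ and $g\in E(x)\setminus A_j$, $|\phi_x(e;w)|\le|2-d_x|\,w_g/w_e\to0$.

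\textbf{Optimization part: your citations versus the paper's direct proof.} You cite Fujishige's optimality criterion and principal-partition theory; the paper proves exactly what it needs by hand.
\begin{itemize}
\item It writes $c=\alpha_r\one+\sum_{j=1}^{r-1}(\alpha_j-\alpha_{j+1})\one_{A_j}$. Using $z(E)=c(E)$ and $z(A_j)\le\beta(A_j)=c(A_j)$, this gives $\langle c,z-c\rangle\ge0$ for all $z\in B(\beta)$. That is the sufficiency half of the criterion you invoke.
\item For $A_j\subsetneq A$, the chain
\[
\alpha_{j+1}(|A|-|A_j|)\le c(A)-c(A_j)\le\beta(A)-\beta(A_j)
\]
gives \eqref{eq:principalchain}, with equality at $A_{j+1}$.
\item Equality in the first inequality forces $c_e=\alpha_{j+1}$ on $A\setminus A_j$, so $A\subseteq A_{j+1}$. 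This gives maximality without any lattice argument.
\end{itemize}
Your route is also correct. The restriction of $c$ to $E\setminus A_j$ is indeed the minimum-norm base of the contraction, by the same criterion applied to the tight sets $A_k$ with $k>j$. Still, the direct argument is self-contained and no longer than checking the hypotheses of the cited results. Your lattice argument for closure under unions and nonempty intersections is the same as the paper's use of the nonnegative submodular function $\beta(A)-c_*|A|$.
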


It follows that $c$, $c_*$, and $E_*$ depend only on the tree and not
on the positive initial metric.  Minimum-norm points of base polyhedra
and their tight level sets are classical objects in submodular
optimization; see \cite{Fujishige1980,Fujishige2005}.

Curvature convergence alone does not imply convergence of the
normalized weights.  All edges in $E_*$ have the same exponential
rate, while their relative weights may still vary.  We therefore
describe the equations satisfied by omega-limit points.

For a nonempty set $C\subseteq E$, put
\[
 \Delta_C:=\left\{p\in\R^C:p_e>0\text{ for every }e\in C,
                         \ \sum_{e\in C}p_e=1\right\}.
\]
We identify $p\in\Delta_C$ with the vector in $\R^E$ obtained by setting
$p_e=0$ for $e\notin C$.

Suppose that $w(t_j)\to p$ as $t_j\to\infty,$ and that $C$ is the positive support of $p$.  If
$e\in C$ and some edge outside $C$ is incident to an endpoint $x$ of $e$, then
$D_x(t_j)\to\infty$ and the contribution of $x$ to $\kappa_e(t_j)$ tends to zero.
In the limit, only vertices $x$ for which $E(x)\subseteq C$ contribute to the
curvature.  For $p\in\Delta_C$ and $e\in C$, define
\begin{equation}\label{eq:boundarycurvature}
 \kappa_e^C(p):=
 \sum_{\substack{x\in e\\E(x)\subseteq C}}
 \frac{2-d_x}{p_e\displaystyle\sum_{f\in E(x)}p_f^{-1}},
\end{equation}
and for any $c\in \R$ and $C\subseteq E,$ write the set of Einstein metrics of curvature $c$ supported on $C$ as
\begin{equation*}
 \mathcal S_C(c):=
 \left\{p\in\Delta_C:\kappa_e^C(p)=c\text{ for every }e\in C\right\}.
\end{equation*}

These equations depend only on the tree and the support $C$.
Pathwise consequences were obtained in
\cite[Section~6]{BaiHuaLinLiu2025}; here we use the full system on the
positive support.

\begin{theorem}\label{thm:algebraiccriterion}
Let $w^*$ be an omega-limit point of the normalized trajectory and set
$C=E^+(w^*)$.  Then
\[
                         C\subseteq E_*,
 \qquad                  w^*|_C\in\mathcal S_C(c_*).
\]
Consequently, if
\begin{equation}\label{eq:candidates}
 \bigcup_{\varnothing\ne C\subseteq E_*}\mathcal S_C(c_*)
\end{equation}
is finite, then the normalized metric converges.  In particular, it is enough that
$\mathcal S_C(c_*)$ contain at most one point for every nonempty
$C\subseteq E_*$.
\end{theorem}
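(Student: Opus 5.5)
The plan is to obtain both assertions from Theorem~\ref{thm:kappalimit} together with an explicit formula for the curvature on a tree, and then to use a connectedness argument for the convergence statement.

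\emph{Step 1: the support and the Einstein equations.} Fix an omega-limit point $w^*$ and a sequence $t_j\to\infty$ with $w(t_j)\to w^*$. Since $\sum_e w_e(t)=1$ for all $t$, we have $\sum_e w^*_e=1$, so $C=E^+(w^*)$ is nonempty. The inclusion $C\subseteq E_*$ is exactly \eqref{eq:supportminimal}: for $e\notin E_*$, \eqref{eq:exponentialrates} gives $w_e(t)\to0$, so $w^*_e=0$.

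For the equations I will use the closed form of the Lin--Lu--Yau curvature in the inverse-weight model on a tree (from \cite{BaiHuaLinLiu2025}; it is also what underlies \eqref{eq:boundarycurvature}):
\[
 \kappa_e(w)=\sum_{x\in e}\frac{2-d_x}{w_e\,D_x(w)} .
\]
Fix $e\in C$ and an endpoint $x$ of $e$, and consider two cases.
\begin{enumerate}[(i)]
\item If $E(x)\subseteq C$, then every $w_f(t_j)$ with $f\in E(x)$ tends to $w^*_f>0$. Hence $D_x(t_j)\to\sum_{f\in E(x)}(w^*_f)^{-1}$, and the term of $x$ converges to its counterpart in $\kappa^C_e(w^*)$.
\item If some $f\in E(x)$ lies outside $C$, then $w_f(t_j)\to0$, so $D_x(t_j)\to\infty$. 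Since $w_e(t_j)\to w^*_e>0$, the term of $x$ tends to $0$.
\end{enumerate}
Therefore $\kappa_e(t_j)\to\kappa^C_e(w^*|_C)$. By Theorem~\ref{thm:kappalimit}, $\kappa_e(t)\to c_e=c_*$ because $e\in C\subseteq E_*$. Comparing the two limits gives $\kappa^C_e(w^*|_C)=c_*$ for every $e\in C$, that is, $w^*|_C\in\mathcal S_C(c_*)$.

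\emph{Step 2: convergence.} The trajectory $t\mapsto w(t)$ is continuous and stays in the compact closed simplex. Its omega-limit set $\Omega$ is therefore nonempty, compact and connected; this is the standard argument, since a disconnection of $\Omega$ into two separated closed pieces would force the path to cross a compact "gap" region infinitely often, producing an omega-limit point outside both pieces. By Step~1, the map $w^*\mapsto w^*|_{E^+(w^*)}$ sends $\Omega$ into the set \eqref{eq:candidates}. This map is injective, because $w^*$ is recovered from its restriction by extending by zero. So if \eqref{eq:candidates} is finite, then $\Omega$ is a finite connected set, hence a single point. This means $w(t)$ converges. Finally, if each $\mathcal S_C(c_*)$ has at most one point, the union over the finitely many nonempty $C\subseteq E_*$ is finite, which gives the last assertion.

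\emph{Main obstacle.} The delicate point is the passage to the limit in Step~1, which needs the exact tree formula for $\kappa_e$ in terms of $D_x$. That formula is what makes the contributions from vertices meeting edges outside $C$ vanish. The inequality $w_e(t_j)\to w^*_e>0$ for $e\in C$ is essential here: it keeps the prefactor $1/w_e$ bounded, so the divergence $D_x\to\infty$ kills the term. If the formula is not available in this exact form, I would first re-derive it from the optimal transport plan on the star at each endpoint, which is explicit on a tree.
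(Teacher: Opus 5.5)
Your proposal is correct and follows essentially the same route as the paper: pass to the limit endpoint by endpoint in the tree curvature formula along $t_j$, with $D_x(t_j)\to\infty$ killing the contribution of any vertex that touches an edge outside $C$, and combine this with Theorem~\ref{thm:kappalimit}; then use connectedness of $\omega(w)$ to deduce convergence from finiteness of the candidate set. The only cosmetic difference is that the paper gets connectedness by writing $\omega(w)$ as the nested intersection of the compact connected sets $\overline{\{w(t):t\ge T\}}$, instead of your gap-crossing argument.
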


To describe the solution set $\mathcal S_C(c)$, let $G_C$ be the
graph whose vertices are the edges in $C$.  Two edges are adjacent in
$G_C$ if they meet at a vertex $x$ with $d_x\ge3$ and
$E(x)\subseteq C$.

\begin{theorem}\label{thm:boundarystructure}
Let $C\subseteq E$ be nonempty.  Every $p\in\mathcal S_C(c)$ satisfies
\begin{equation}\label{eq:balanceidentity}
 |C|c=\sum_{\substack{x\in V\\E(x)\subseteq C}}(2-d_x).
\end{equation}
In particular, the support $C$ itself determines the only possible value of $c$.

Suppose that $G_C$ has connected components $C_1,\ldots,C_s$ and that
$\mathcal S_C(c)$ is nonempty.  Then there are uniquely determined vectors
$q^{(i)}\in\Delta_{C_i}$ such that
\begin{equation}\label{eq:SCsimplex}
 \mathcal S_C(c)=
 \left\{\sum_{i=1}^s\theta_iq^{(i)}:
       \theta_i>0,\ \sum_{i=1}^s\theta_i=1\right\}.
\end{equation}
When $G_C$ is connected, $\mathcal S_C(c)$ is a single point; otherwise it is an
open simplex of dimension $s-1$.

Let $w^*$ be an omega-limit point whose positive support $C$ has maximal cardinality
among the positive supports of all omega-limit points.  If $G_C$ is connected, then
$w(t)\to w^*$.  In particular, this conclusion holds if
$E^+(w^*)=E_*$ and $G_{E_*}$ is connected.
\end{theorem}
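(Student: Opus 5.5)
Three parts have to be proved: the balance identity \eqref{eq:balanceidentity}, the simplex description \eqref{eq:SCsimplex}, and the convergence statement.

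\emph{Balance identity.} For $p\in\Delta_C$ and a vertex $x$ with $E(x)\subseteq C$, put $h_x:=\bigl(\sum_{f\in E(x)}p_f^{-1}\bigr)^{-1}$. Then \eqref{eq:boundarycurvature} reads
\[
 p_e\kappa_e^C(p)=\sum_{x\in e,\;E(x)\subseteq C}(2-d_x)h_x,
\]
so every $p\in\mathcal S_C(c)$ satisfies $cp_e=\sum_{x\in e,\,E(x)\subseteq C}(2-d_x)h_x$. Multiplying by $p_e^{-1}$ and summing over $e\in C$ works vertex by vertex: for each $x$ with $E(x)\subseteq C$ the sum over $e\ni x$ gives $(2-d_x)h_x\sum_{e\ni x}p_e^{-1}=2-d_x$. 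Hence
\[
 c\sum_{e\in C}p_e^{-1}\cdot p_e=c|C|=\sum_{x:\,E(x)\subseteq C}(2-d_x),
\]
which is \eqref{eq:balanceidentity}. This identity already fixes $c$ from $C$ alone.

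\emph{Structure of $\mathcal S_C(c)$.} The plan is to substitute $u_e:=p_e^{-1}$. The equation for $e$ becomes
\[
 c=\sum_{x\in e,\,E(x)\subseteq C}(2-d_x)\,\frac{u_e}{\sum_{f\ni x}u_f},
\]
which is homogeneous of degree $0$ in $u$, so the full system is invariant under scaling $u$, i.e.\ scaling $p$. Vertices of degree $2$ contribute zero and degree-$1$ vertices contribute $u_e/u_e=1$, so the coupling between distinct edges runs only through vertices of degree $\ge3$ with $E(x)\subseteq C$. These are exactly the adjacencies of $G_C$. The system therefore decouples over the components $C_i$, and on each component it is scale invariant. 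It remains to show that on each component the projective solution is unique. Then $\mathcal S_C(c)$ consists of the vectors $\sum_i\theta_iq^{(i)}$ with arbitrary positive component scales $\theta_i$ normalized to total mass one, which is \eqref{eq:SCsimplex}.

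For uniqueness on a connected $C_i$, take two solutions $u,v$, set $r_e=v_e/u_e$, and let $e$ maximize $r$. Write the equation as $c-\#\{\text{leaf endpoints of }e\}=\sum_x(2-d_x)\,u_e/S_x(u)$ with $d_x\ge3$. The summands have negative coefficients $2-d_x$. Since $S_x(v)\le r_eS_x(u)$, we get $v_e/S_x(v)\ge u_e/S_x(u)$, with equality iff $r_f=r_e$ for all $f\ni x$. The right side for $v$ is therefore $\le$ the right side for $u$, with equality forcing every neighbor in $G_C$ to attain the maximum ratio. Both sides equal the same constant, so equality holds; by connectivity $r$ is then constant, giving uniqueness. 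The main subtlety I expect is the bookkeeping for leaf vertices and for vertices $x\in e$ with $E(x)\not\subseteq C$, which contribute nothing, and checking that this maximum-principle argument is really uniform across those cases.

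\emph{Convergence.} Let $C$ be the maximal-cardinality support of an omega-limit point $w^*$, with $G_C$ connected. By Theorem~\ref{thm:algebraiccriterion}, every omega-limit point $w'$ satisfies $w'|_{C'}\in\mathcal S_{C'}(c_*)$ with $C'=E^+(w')\subseteq E_*$. I would compare with the balance identity: $c_*|C'|=\beta(C')$, so $C'$ attains the minimum in \eqref{eq:cstarcombinatorial}. The omega-limit set is connected and compact, and near $w^*$ all coordinates in $C$ stay positive, so omega-limit points close to $w^*$ have support $\supseteq C$. By maximality their support equals $C$, and by uniqueness in $\mathcal S_C(c_*)$ they equal $w^*$. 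Thus $w^*$ is isolated in the omega-limit set, and connectedness forces the omega-limit set to be $\{w^*\}$, so $w(t)\to w^*$. The final clause follows by taking $C=E_*$, which is automatically maximal since all supports lie in $E_*$.
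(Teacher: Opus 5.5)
Your proposal is correct. The balance identity is the same computation as in the paper, and so is the convergence argument: you show $w^*$ is isolated in the connected set $\omega(w)$ using maximality of $|C|$ and the singleton property of $\mathcal S_C(c_*)$. Your detour through $c_*|C'|=\beta(C')$ is harmless but unused there.

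The genuine difference is in how you identify $\mathcal S_C(c)$.

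\emph{The paper's route.} It works in logarithmic coordinates $y=\log p$ with the convex potential $\Phi_C$. Its gradient is $\kappa^C$, and its Hessian kernel is, independently of $y$, the space of vectors constant on each $C_i$. For two solutions, $0=\langle y-y^0,\nabla\Phi_C(y)-\nabla\Phi_C(y^0)\rangle$ is written as an integral of nonnegative Hessian quadratic forms along the segment. This forces $y-y^0$ into that kernel.

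\emph{Your route.} You pass to $u=p^{-1}$, move the leaf contributions to the constant side, and run a discrete strong maximum principle on $r=v/u$. At a maximizing edge, the negative coefficients $2-d_x$ and the bound $S_x(v)\le r_eS_x(u)$ make every summand one-sided. Equality of the totals then forces equality at each contributing vertex, so the maximum propagates along $G_C$-adjacencies.

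\emph{What each buys.} Your argument is elementary and needs no second derivatives; it is analogous to the $H_{\max}$ argument in the proof of Theorem~\ref{thm:trajectorycomparison}. The paper's argument costs a Hessian computation. However, $\Phi_C$ and its kernel description are reused immediately: for existence by coercive minimization in Theorem~\ref{thm:positivecriterion}, and for the perturbed gradient flow in Theorem~\ref{thm:fullsupport}. So the potential-based proof fits the paper's overall architecture, while yours is more self-contained.

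When you write yours up, take the maximum of $r$ over a single component $C_i$. The decoupling guarantees $E(x)\subseteq C_i$ for every contributing vertex $x$ of an edge $e\in C_i$, so the bound $S_x(v)\le r_eS_x(u)$ is legitimate. Also state explicitly that the set of maximizers is closed under $G_C$-adjacency; connectivity of $G_{C_i}$ then turns this into constancy of $r$ on $C_i$.
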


\begin{theorem}\label{thm:positivecriterion}
Let $C\subseteq E$ be nonempty, suppose that $G_C$ is connected, and put
$c=\lambda(C)$.  Then
\begin{equation}\label{eq:strictcriterion}
 \mathcal S_C(c)\ne\varnothing
 \quad\Longleftrightarrow\quad
 \lambda(A)>c
 \quad\text{for every }\varnothing\ne A\subsetneq C.
\end{equation}
When these conditions hold, $\mathcal S_C(c)$ consists of one point.

Consequently, if $w^*$ is an omega-limit point, $C=E^+(w^*)$, and
$C_1,\ldots,C_s$ are the connected components of $G_C$, then $C$ and each
$C_i$ attain the global minimum $c_*$ of $\lambda$ over the nonempty subsets
of $E$.  Moreover, each $C_i$ is inclusion-minimal among the minimizing sets.
\end{theorem}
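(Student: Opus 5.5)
The plan is to prove \eqref{eq:strictcriterion} in two directions, one by a curvature-summation identity and one by a variational argument, and then read off the consequences from Theorems~\ref{thm:combinatorial}, \ref{thm:algebraiccriterion} and \ref{thm:boundarystructure}.

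\emph{Necessity and the summation identity.} For $p\in\Delta_C$ and a vertex $x$ with $E(x)\subseteq C$, put
\[
h_{x,e}=\frac{p_e^{-1}}{\sum_{f\in E(x)}p_f^{-1}},
\]
so that $\sum_{e\in E(x)}h_{x,e}=1$ and $\kappa^C_e(p)=\sum_{x\in e,\,E(x)\subseteq C}(2-d_x)h_{x,e}$. Summing over $e\in A$ for $\varnothing\ne A\subseteq C$ and sorting the vertices gives
\[
\sum_{e\in A}\kappa^C_e(p)=\beta(A)+\sum_{x}(2-d_x)\sum_{e\in A\cap E(x)}h_{x,e}.
\]
The last sum runs over those $x$ with $E(x)\subseteq C$ that are \emph{split} by $A$, meaning $\varnothing\ne A\cap E(x)\ne E(x)$. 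Vertices with $E(x)\subseteq A$ contribute $2-d_x$ exactly. Leaves cannot be split. Split vertices of degree $2$ contribute $0$, and split vertices of degree at least $3$ contribute a strictly negative amount.

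Now let $p\in\mathcal S_C(c)$. Then $c|A|\le\beta(A)$, with equality only if $A$ splits no vertex $x$ with $d_x\ge3$ and $E(x)\subseteq C$. Taking $A=C$ gives $c=\lambda(C)$, which is \eqref{eq:balanceidentity}. If $A\subsetneq C$ is nonempty, connectivity of $G_C$ yields a $G_C$-edge between $A$ and $C\setminus A$. By the definition of $G_C$, such an edge is precisely a vertex of degree $\ge3$ with $E(x)\subseteq C$ that $A$ splits. Hence $\lambda(A)>c$.

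\emph{Sufficiency by a variational argument.} Work in logarithmic coordinates $u_e=\log p_e$ on $\R^C$ and set
\[
F(u)=-\sum_{x:\,E(x)\subseteq C}(2-d_x)\log\Bigl(\sum_{f\in E(x)}e^{-u_f}\Bigr)-c\sum_{e\in C}u_e .
\]
A direct differentiation gives $\partial F/\partial u_e=\kappa^C_e(p)-c$, so the critical points of $F$ are exactly the Einstein metrics. Since $c=\lambda(C)$, i.e.\ $\beta(C)=c|C|$, the function $F$ is invariant under $u\mapsto u+t\one$, so it suffices to show that $F$ is coercive on $\one^\perp$.

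To see this, replace each log-sum-exp by $-\min_{f\in E(x)}u_f$; this changes $F$ by at most a constant depending only on the degrees. The resulting piecewise-linear function is
\[
\Psi(u)=\sum_x(2-d_x)\min_{E(x)}u-c\sum_e u_e .
\]
Write $\min_{E(x)}u$ and $u_e$ as layer-cake integrals over the superlevel sets $A_t=\{u>t\}$, noting that $E(x)\subseteq A_t$ exactly when $\min_{E(x)}u>t$. Then
\[
\Psi(u)=\Psi(\min u\,\one)+\int_{\min u}^{\max u}\bigl(\beta(A_t)-c|A_t|\bigr)\,dt .
\]
For $t$ in this range, $A_t$ is a nonempty proper subset of $C$. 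The hypothesis, over finitely many sets, gives $\beta(A)-c|A|\ge\delta>0$ for all such $A$. Hence $F(u)\ge\delta(\max u-\min u)-\mathrm{const}$. So $F$ attains a minimum on $\one^\perp$, and this minimum is a point of $\mathcal S_C(c)$. Because $G_C$ is connected, Theorem~\ref{thm:boundarystructure} shows that $\mathcal S_C(c)$ is a single point. The main technical point is this coercivity step, which handles multi-scale degenerations; the layer-cake reduction is exactly what makes it clean.

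\emph{Consequences.} Let $w^*$ be an omega-limit point with $C=E^+(w^*)$. By Theorem~\ref{thm:algebraiccriterion}, $w^*|_C\in\mathcal S_C(c_*)$, and \eqref{eq:balanceidentity} gives $\lambda(C)=c_*$, which is the global minimum by \eqref{eq:cstarcombinatorial}.

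By Theorem~\ref{thm:boundarystructure}, $w^*|_C=\sum_i\theta_iq^{(i)}$ with $q^{(i)}\in\Delta_{C_i}$. A vertex of degree $\ge3$ with $E(x)\subseteq C$ has all its edges in a single component $C_i$. Moreover, $\kappa^C_e$ is homogeneous of degree $0$ and, for $e\in C_i$, depends only on the $p$-values of edges in $C_i$: at a degree-$2$ or leaf vertex $x\in e$, $E(x)$ lies inside one component. It follows that the normalized restriction of $w^*$ to $C_i$, namely $q^{(i)}$, lies in $\mathcal S_{C_i}(c_*)$. Since $G_{C_i}$ is connected, the necessity part gives $\lambda(C_i)=c_*$ and $\lambda(A)>c_*$ for every nonempty $A\subsetneq C_i$. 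So each $C_i$ is a minimizing set and is inclusion-minimal among the minimizing sets.
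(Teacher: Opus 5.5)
Your proposal is correct and follows the paper's proof essentially step for step: necessity uses a split vertex of degree at least three in the vertex-grouped sum; sufficiency shows $\Phi_C-c\sum_e u_e$ is coercive on $\one^\perp$ by the same layer-cake comparison with $\sum_x(2-d_x)\min_{E(x)}u-c\sum_e u_e$; and the consequences come from decoupling across the components of $G_C$. The only slip is your remark that at a degree-two vertex $E(x)$ lies in one component of $G_C$; this is false in general, since such a vertex can join two components. It is harmless, because degree-two vertices contribute nothing to $\kappa^C$.
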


\begin{corollary}\label{cor:Estarcriterion}
Let $C_1,\ldots,C_s$ be the connected components of $G_{E_*}$.  Then
\begin{equation}\label{eq:componentbalance}
                         \beta(C_i)=c_*|C_i|
                         \qquad(1\le i\le s),
\end{equation}
and
\begin{equation}\label{eq:componentcriterion}
 \mathcal S_{E_*}(c_*)\ne\varnothing
 \quad\Longleftrightarrow\quad
 \beta(A)>c_*|A|
 \quad\text{for every }i
 \text{ and every }\varnothing\ne A\subsetneq C_i.
\end{equation}
\end{corollary}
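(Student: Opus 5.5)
The plan is to derive both parts of Corollary~\ref{cor:Estarcriterion} from Theorems~\ref{thm:combinatorial} and~\ref{thm:positivecriterion}, using the decomposition of $\beta$ along the components of $G_{E_*}$.

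\emph{Step 1: additivity of $\beta$ over components.} For $A\subseteq E_*$, $\beta(A)$ sums $2-d_x$ over vertices $x$ with $E(x)\subseteq A$. Take such a vertex with $d_x\ge3$. Since $E(x)\subseteq A\subseteq E_*$, all its edges are pairwise adjacent in $G_{E_*}$, so $E(x)$ lies in one component $C_i$. A vertex with $d_x\le2$ and $E(x)\subseteq A$ may straddle two components (when $d_x=2$), but then it contributes $2-d_x=0$. A leaf has $E(x)$ equal to a single edge, hence inside one component. Therefore $\beta(A)=\sum_i\beta(A\cap C_i)$ for every $A\subseteq E_*$, and in particular $\beta(E_*)=\sum_i\beta(C_i)$.

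\emph{Step 2: proof of \eqref{eq:componentbalance}.} By \eqref{eq:cstarcombinatorial}, $\beta(C_i)\ge c_*|C_i|$ for every $i$. By Theorem~\ref{thm:combinatorial}, $\beta(E_*)=c_*|E_*|=\sum_i c_*|C_i|$. Comparing this with the sum in Step 1 forces equality in each term, which gives \eqref{eq:componentbalance}.

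\emph{Step 3: reduction of $\mathcal S_{E_*}(c_*)$ to the components.} The components of $G_{E_*}$ are unions of edges, and each $x$ with $E(x)\subseteq E_*$ and $d_x\ge3$ lies inside a single $C_i$. Hence the equations $\kappa^{E_*}_e(p)=c_*$ decouple. Concretely, if $p\in\mathcal S_{E_*}(c_*)$ and $e\in C_i$, the terms at the endpoints of $e$ involve only $p_f$ for $f\in C_i$. These terms are homogeneous of degree $0$ under scaling of $p|_{C_i}$. A leaf term equals $1$. A degree-two vertex term vanishes, even when its other edge lies in another component; this matches the convention in \eqref{eq:boundarycurvature} with the restricted support, where $E(x)\not\subseteq C_i$ and the vertex is simply omitted.

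So $p\in\mathcal S_{E_*}(c_*)$ if and only if each normalized restriction $p|_{C_i}/p(C_i)$ lies in $\mathcal S_{C_i}(c_*)$ (with $G_{C_i}$ being exactly component $i$). Since $\mathcal S_{C_i}(c_*)$ is a set of normalized vectors, the converse direction recombines the pieces with arbitrary positive weights $\theta_i$, as in \eqref{eq:SCsimplex}. Thus $\mathcal S_{E_*}(c_*)\neq\varnothing$ if and only if every $\mathcal S_{C_i}(c_*)$ is nonempty.

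\emph{Step 4: apply the connected criterion.} By Step 2, $c_*=\lambda(C_i)$, and $G_{C_i}$ is connected. Theorem~\ref{thm:positivecriterion} then says $\mathcal S_{C_i}(c_*)\ne\varnothing$ if and only if $\lambda(A)>c_*$, that is $\beta(A)>c_*|A|$, for every $\varnothing\ne A\subsetneq C_i$. Combining this with Step 3 yields \eqref{eq:componentcriterion}.

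The main obstacle is Step 3: one must check carefully that degree-two vertices joining different components contribute nothing, and that the restricted curvatures $\kappa^{C_i}$ agree with $\kappa^{E_*}$ on $C_i$. This follows since such vertices have coefficient $2-d_x=0$.
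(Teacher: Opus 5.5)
Your proposal is correct and follows essentially the same route as the paper. Additivity of $\beta$ over the components of $G_{E_*}$, combined with $\beta(E_*)=c_*|E_*|$ and $\beta(C_i)\ge c_*|C_i|$, forces \eqref{eq:componentbalance}; decoupling of the boundary equations, via scale invariance, then reduces nonemptiness of $\mathcal S_{E_*}(c_*)$ to applying Theorem~\ref{thm:positivecriterion} on each $C_i$. Your write-up also makes explicit two checks the paper leaves terse: $G_{C_i}$ is exactly the $i$-th component, and degree-two vertices joining different components contribute nothing.
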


Corollary~\ref{cor:Estarcriterion} gives a combinatorial criterion for
the existence of a positive solution on $E_*$.  When such a solution
exists, Theorem~\ref{thm:boundarystructure} fixes the normalized
profile on each component of $G_{E_*}$, although the limiting equations
alone do not determine the relative masses of different components.
The flow nevertheless determines these masses and converges:

\begin{theorem}\label{thm:fullsupport}
Assume that
\[
                         \mathcal S_{E_*}(c_*)\ne\varnothing.
\]
Then the normalized metric converges to a limit $w^\infty$ whose positive support
is exactly $E_*$.  Moreover, $w^\infty|_{E_*}\in\mathcal S_{E_*}(c_*)$, and
\[
 \lim_{t\to\infty}\frac{w_e(t)}{w_f(t)}
 =\frac{w_e^\infty}{w_f^\infty}\in(0,\infty)
 \qquad(e,f\in E_*).
\]
No connectedness assumption on $G_{E_*}$ is required.  In particular, if one
 omega-limit point has positive support $E_*$, then the whole trajectory converges to
that omega-limit point
\end{theorem}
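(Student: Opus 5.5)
Assume $\mathcal S_{E_*}(c_*)\ne\varnothing$. The plan has three steps: show that every omega-limit point has positive support exactly $E_*$; show that the flow pins down the relative masses of the components of $G_{E_*}$; then conclude with Theorem~\ref{thm:boundarystructure}.

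\emph{Step 1: full support.} Let $C_1,\dots,C_s$ be the components of $G_{E_*}$. By Theorem~\ref{thm:boundarystructure}, $\mathcal S_{E_*}(c_*)$ is the open simplex spanned by uniquely determined profiles $q^{(i)}\in\Delta_{C_i}$. By Corollary~\ref{cor:Estarcriterion}, each $C_i$ satisfies $\beta(A)>c_*|A|$ for every $\varnothing\ne A\subsetneq C_i$. Let $w^*$ be an omega-limit point with support $C$. Theorem~\ref{thm:positivecriterion} says that each component of $G_C$ is an inclusion-minimal minimizer of $\lambda$ and is contained in $E_*$. I will show that each such component is one of the $C_i$. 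Adjacency in $G_C$ requires a vertex $x$ with $E(x)\subseteq C\subseteq E_*$, so a component $D$ of $G_C$ lies inside a single $C_i$. Minimality of $C_i$ (the strict inequality from the corollary) then forces $D=C_i$. Hence $C$ is a union of some of the $C_i$.

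To rule out a missing component, I will use a Lyapunov-type quantity. Along the flow, $\frac{d}{dt}\log w_e=-(\kappa_e-K)$. For each $i$, set $M_i(t):=\sum_{e\in C_i}w_e$ and compare it with a geometric mean $\prod_{e\in C_i}w_e^{q^{(i)}_e}$. The quantity I expect to control is
\[
 \sum_{e\in C_i}\log w_e-|C_i|\log\Bigl(\sum_{f\in C_i} w_f\Bigr),
\]
whose derivative is $-\sum_{e\in C_i}(\kappa_e-K)$ up to the normalization term. Summing the curvatures over $C_i$ reproduces the balance identity~\eqref{eq:balanceidentity}: the interior vertices of $C_i$ contribute exactly $\beta(C_i)=c_*|C_i|$, while boundary vertices, whose edges leave $C_i$, contribute a term with a definite sign. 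I expect this to show that $\sum_{e\in C_i}\kappa_e(t)-c_*|C_i|$ is integrable, or at least bounded in sign, so that $\log$ of the mass ratios $M_i/M_j$ stays bounded. That would prevent any $C_i$ from vanishing relative to the others.

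\emph{Step 2: the relative masses converge.} Here I would compute the exact contribution of a boundary vertex $x$ of $C_i$. That is a vertex in $e\in C_i$ with some $f\in E(x)$ either outside $E_*$, or inside $E_*$ but meeting $x$ with $d_x\ge 3$ only partially inside $C_i$. Its contribution to $\kappa_e$ has the form $(2-d_x)/(w_eD_x)$ plus transport terms. I would use the rates in Theorem~\ref{thm:kappalimit}: edges outside $E_*$ decay exponentially relative to $E_*$, so contributions from vertices touching them decay exponentially and are integrable. The delicate case is a degree-two vertex, or a vertex between two different $C_i$ with all its edges in $E_*$. Since such vertices do not create adjacency in $G_{E_*}$, they should have $d_x=2$, with contribution $0$ to $\beta$. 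Their contribution to the curvatures must cancel when summed over a component, giving a conserved quantity or a monotone one. Therefore $\frac{d}{dt}\log(M_i/M_j)$ is integrable, and $M_i/M_j$ converges to a positive limit.

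\emph{Step 3: convergence of the trajectory.} Every omega-limit point lies in $\mathcal S_{E_*}(c_*)$ with the weights $\theta_i$ equal to the limits of $M_i$. Since the $q^{(i)}$ are fixed, the omega-limit set is a single point $w^\infty$. Compactness of the simplex then gives $w(t)\to w^\infty$, and the ratio statement follows from positivity. The final sentence of the theorem follows because an omega-limit point with support $E_*$ exists only if $\mathcal S_{E_*}(c_*)\ne\varnothing$, by Theorem~\ref{thm:algebraiccriterion}.

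The main obstacle is Step 2, together with the non-vanishing part of Step 1: proving integrability of the inter-component exchange terms. For that I would first try an exact computation of $\kappa_e$ on trees near degree-two junctions, where I suspect the two sides' contributions cancel.
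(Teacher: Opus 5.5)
\textbf{There is a genuine gap: the mechanism you propose for the relative masses of the components does not work as stated.} Your reduction in Step~1 is sound. A component of $G_C$ lies in a single component $C_i$ of $G_{E_*}$, and the strict inequality in Corollary~\ref{cor:Estarcriterion} forces it to equal $C_i$, so every omega-limit support is a union of the $C_i$. The last sentence of the theorem is also handled correctly. The problem is the relative masses, which are the whole content of the theorem.

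The balance identity controls only the \emph{unweighted} sum $\sum_{e\in C_i}\kappa_e$. That sum is the derivative of $-\sum_{e\in C_i}\log\widetilde w_e$, i.e.\ of the log-geometric mean on $C_i$. With $\widetilde M_i:=\sum_{e\in C_i}\widetilde w_e$, however,
\[
\tfrac{d}{dt}\log\widetilde M_i=-\sum_{e\in C_i}(\widetilde w_e/\widetilde M_i)\kappa_e
\]
is a \emph{weighted} average, about which the balance identity says nothing. So ``therefore $\frac{d}{dt}\log(M_i/M_j)$ is integrable'' does not follow. The two quantities differ by the intra-component profile $\sum_{e\in C_i}\log(w_e/M_i)$, which is exactly your Lyapunov candidate. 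You never show that it stays bounded, and nothing in Steps~1--2 prevents edges of $C_i$ from collapsing relative to one another.

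Your Step~2 worries are also misplaced. Proposition~\ref{prop:treeformula} gives the curvature exactly, with no transport terms, and a degree-two vertex contributes $0$ identically, so there is nothing to cancel. The relevant fact is this: a vertex with $d_x\ge3$ and $E(x)\subseteq E_*$ has all its edges in one $C_i$. Hence every vertex with $d_x\ge3$ meeting $C_i$ only partially carries an edge outside $E_*$. By \eqref{eq:componentbalance} and the rates in Theorem~\ref{thm:kappalimit}, $c_*|C_i|-\sum_{e\in C_i}\kappa_e$ is therefore nonnegative and $O(e^{-\delta t})$.

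\textbf{How your route can be completed.} The function $G_i(t):=\sum_{e\in C_i}(\log\widetilde w_e+c_*t)$ is nondecreasing with integrable derivative, hence convergent.
\begin{itemize}
\item Non-vanishing: suppose $w^*\in\omega(w)$, obtained along $t_j$, contains $C_i$ but omits $C_k$. The profile on $C_i$ is nondegenerate along $t_j$, so $\log\widetilde M_i(t_j)+c_*t_j$ is bounded. The AM--GM inequality gives $\log\widetilde M_k(t)+c_*t\ge G_k(t)/|C_k|+\log|C_k|$. Together these contradict $M_k(t_j)/M_i(t_j)\to0$.
\item Hence every omega-limit point has support $E_*$ and lies in $\mathcal S_{E_*}(c_*)$.
\item Uniqueness of the $q^{(i)}$ in Theorem~\ref{thm:boundarystructure} then gives $w|_{C_i}/M_i\to q^{(i)}$.
\item Then $\log\widetilde M_i+c_*t$ converges, which fixes the $\theta_i$.
\end{itemize}

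\textbf{How the paper does it.} It avoids this component bookkeeping. With $u_e=\log\widetilde w_e+c_*t$ on $C=E_*$, it writes $u'=-\nabla F_C(u)+\varepsilon$ with $\norm{\varepsilon(t)}=O(e^{-\delta t})$. Here $F_C(u)=\Phi_C(u)-c_*\sum_{e\in C}u_e$ is convex, with critical point $v=\log q$ for any $q\in\mathcal S_{E_*}(c_*)$. The Fej\'er estimate $\norm{u(t)-v}\le\norm{u(s)-v}+\int_s^t\norm{\varepsilon}$ bounds $u$. This gives full support, nondegenerate profiles and bounded mass ratios at once. A subsequential limit is a critical point, and reapplying the estimate around it gives convergence. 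The structure of $G_{E_*}$ is never used.
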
 See Example~\ref{ex:cuttingedge}.

For a non-caterpillar tree, the exponential growth of the total internal
unnormalized weight proved in \cite[Lemma~8]{BaiHuaLinLiu2025}, together with
\eqref{eq:exponentialrates}, gives $c_*<0$.  Every edge in $E_*$ therefore has negative
limiting curvature, and $E_*\ne E$: indeed,
$E_*=E$ would imply $c_*=2/|E|>0$ by Proposition~\ref{prop:GB}.  The case $E_*=E$
can occur only in the caterpillar class, although not every caterpillar has this
property.  Negativity of $c_*$ does not, however, imply
$\mathcal S_{E_*}(c_*)\ne\varnothing$.  Example~\ref{ex:nestedminimizer} gives a
non-caterpillar tree for which this set is empty.

The final result compares solutions arising from different positive
initial metrics on the same tree.

\begin{theorem}\label{thm:trajectorycomparison}
Let $\widetilde w$ and $\widetilde v$ be two unnormalized solutions with positive
initial metrics, and let $w$ and $v$ be their normalizations.  For every $e\in E$,
the limit
\[
 r_e:=\lim_{t\to\infty}\frac{\widetilde v_e(t)}{\widetilde w_e(t)}
\]
exists and belongs to $(0,\infty)$.  For $p\in\overline\Delta_E$, define
\[
 T_r(p)_e:=\frac{r_ep_e}{\sum_{f\in E}r_fp_f}.
\]
Then
\begin{equation*}
                    \norm{v(t)-T_r(w(t))}\longrightarrow0
\end{equation*}
and
\begin{equation}\label{eq:omegaprojective}
                    \omega(v)=T_r\bigl(\omega(w)\bigr).
\end{equation}
Here $\omega(w)$ and $\omega(v)$ denote the sets of  omega-limit points of the two
normalized trajectories.  The map $T_r$ is a homeomorphism of
$\overline\Delta_E$, with inverse $T_{r^{-1}}$, where
$(r^{-1})_e=r_e^{-1}$.  Consequently,
\[
 \{E^+(p):p\in\omega(w)\}=\{E^+(q):q\in\omega(v)\},
\]
and $w$ converges if and only if $v$ converges.  If $w(t)\to p$, then
$v(t)\to T_r(p)$.
\end{theorem}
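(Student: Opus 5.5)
The plan is to integrate the unnormalized flow \eqref{eq:unnormalized} directly. For each edge,
\[
 \log\widetilde v_e(t)-\log\widetilde w_e(t)
 =\log\frac{\widetilde v_e(0)}{\widetilde w_e(0)}-\int_0^t\bigl(\kappa_e(\widetilde v(s))-\kappa_e(\widetilde w(s))\bigr)\,ds .
\]
So existence of $r_e\in(0,\infty)$ is equivalent to convergence of $\int_0^\infty(\kappa_e(v)-\kappa_e(w))\,ds$. By Theorem~\ref{thm:combinatorial}, both curvature vectors tend to the same limit $c$, since $c$ depends only on the tree. Convergence to the same limit is not enough, however; I need integrability of the difference.

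To get it, I would rewrite the curvature ODE \eqref{eq:heat} as a telescoping identity. Since $\dot\kappa=-\cL(w)\kappa$ and $\cL(w)$ is built from the flow, I expect, though I would have to check, that $\kappa_e$ can be written as a time derivative of a function of $\log\widetilde w$, up to a term whose integral converges. The natural candidate comes from the structure of \eqref{eq:boundarycurvature}. Each vertex $x$ contributes $(2-d_x)\big/\bigl(w_e D_x(w)\bigr)$, and $\sum_{e\ni x}1/(w_eD_x)=1$. Summing over edges therefore gives $\sum_e\kappa_e=\beta(E)$, a conserved quantity. I would try to show that $\kappa_e-c_e$ is integrable on each trajectory with an explicit tail bound, using $\dot\kappa=-\cL\kappa$ and the energy $\ip{\kappa}{\cL\kappa}=-\frac{d}{dt}\frac12\norm{\kappa}^2$. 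This energy identity gives $\int_0^\infty\ip{\kappa}{\cL\kappa}\,dt<\infty$, which controls $\int\sum a_{ef}(\kappa_e-\kappa_f)^2$.

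The route I would commit to uses level sets. For edges in different level classes $A_{j+1}\setminus A_j$, the rates in \eqref{eq:exponentialrates} differ, so the conductances $a_{ef}$ coupling those classes decay exponentially. Within a level class, I would use the tightness \eqref{eq:tightlevels}: $\sum_{e\in A_j}\kappa_e\to\beta(A_j)$, and the difference can be written through the boundary vertices, whose contribution decays exponentially. Combining these facts, each $\kappa_e(t)-c_e$ should decay exponentially, i.e.\ $|\kappa_e(t)-c_e|\le Ce^{-\delta t}$, which makes both integrals converge, and hence so does their difference. I expect this exponential decay of $\kappa_e-c_e$ to be the main obstacle. Edges with the same limiting rate can still have relative weights oscillating without decay (this is exactly the nonconvergent case), so the argument must show that these oscillations do not affect curvature to first order. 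If exponential decay fails, I would instead aim directly at integrability of $\kappa(v)-\kappa(w)$ via a Grönwall-type comparison of the two heat equations.

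Once $r_e$ exists, the rest is algebra. We have $\widetilde v_e(t)=r_e\widetilde w_e(t)(1+o(1))$. Dividing by the total, and writing $S_w=\sum_f\widetilde w_f$,
\[
 v_e=\frac{r_e w_e(1+o(1))}{\sum_f r_f w_f(1+o(1))}.
\]
The denominator is bounded below by $\min_f r_f>0$, because $\sum_f w_f=1$. Hence $\norm{v(t)-T_r(w(t))}\to0$ uniformly.

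The map $T_r$ is continuous on $\overline\Delta_E$, and $T_{r^{-1}}\circ T_r=\mathrm{id}$ by direct computation, so it is a homeomorphism. Together with the asymptotic equivalence this gives \eqref{eq:omegaprojective}. Since $T_r$ preserves positive supports, the support sets agree, and convergence transfers between the two trajectories.
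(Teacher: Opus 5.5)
Your reduction to the convergence of $\int_0^\infty\bigl(\kappa_e(\widetilde v)-\kappa_e(\widetilde w)\bigr)\,ds$ is correct. The route you commit to is not: you want $|\kappa_e(t)-c_e|\le Ce^{-\delta t}$, or even just convergence of $\int_0^\infty(\kappa_e-c_e)\,ds$, separately along each trajectory. This is false in general, so the central step fails.

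Suppose it held for one trajectory. Then $\log\widetilde w_e(t)+c_et$ would converge for every $e$. Hence $e^{c_*t}\widetilde w_e(t)$ would have a positive limit for $e\in E_*$ and would tend to $0$ for $e\notin E_*$. The normalized metric would then converge to a point whose positive support is exactly $E_*$, and Theorem~\ref{thm:algebraiccriterion} would place its restriction in $\mathcal S_{E_*}(c_*)$. On the tree of Example~\ref{ex:nestedminimizer} this set is empty. So on that tree the individual curvature integrals fail to converge for every positive initial metric.

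Your claim would also settle Problem~\ref{prob:weightconvergence}, which the paper leaves open. It also contradicts the oscillation you yourself allow: for $e,f\in E_*$, $\log(w_e/w_f)$ is a difference of exactly these integrals. The ingredients you list do not give the claim either. Tightness of the level sets $A_j$ gives exponential convergence only of the aggregate sums $\sum_{e\in A_j}\kappa_e$, not of individual edges inside a level class. The dissipation identity controls only $\int\sum a_{ef}(\kappa_e-\kappa_f)^2$, with conductances that may degenerate (Remark~\ref{rem:minimalclass}).

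The missing idea is that only the difference of the two integrals converges. You get it by working with $h:=\log\widetilde v-\log\widetilde w$, not by comparing the two curvature heat equations. Since the unnormalized logarithmic flow is $\dot y=-\nabla\Phi(y)$ (Proposition~\ref{prop:potential}), you have $h'=-\bigl(\nabla\Phi(y+h)-\nabla\Phi(y)\bigr)=-B(t)h$ with $B(t)=\int_0^1\nabla^2\Phi(y+\tau h)\,d\tau$. This is an average of Laplacians $\cL$, hence itself a symmetric weighted Laplacian with nonnegative conductances. The argument then runs in three steps:
\begin{enumerate}[(1)]
\item The maximum principle makes $\max_eh_e$ nonincreasing and $\min_eh_e$ nondecreasing, so $h$ is bounded.
\item The exchange argument from the proof of Theorem~\ref{thm:kappalimit}, applied to $h$, shows that the sum of the $i$ smallest entries of $h$ is nondecreasing, hence convergent, for every $i$.
\item Each order statistic therefore converges, and Lemma~\ref{lem:orderedlimits} gives convergence of every $h_e$, that is, $\widetilde v_e/\widetilde w_e\to r_e\in(0,\infty)$.
\end{enumerate}
No rate, no uniform ellipticity, and no knowledge of $c$ is needed. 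A Grönwall-type comparison cannot replace this. At best it bounds $h$; convergence comes from the order-preserving Laplacian structure of $B(t)$.

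Once $r_e$ exists, your final paragraphs are correct: the estimate $\norm{v(t)-T_r(w(t))}\to0$, $T_{r^{-1}}\circ T_r=\mathrm{id}$, and the transfer of omega-limit sets, supports and convergence. The only addition needed is a compactness extraction for the inclusion $\omega(v)\subseteq T_r(\omega(w))$.
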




Section~\ref{sec:setup} records the tree formula and Gauss--Bonnet identity.
Section~\ref{sec:heat} proves the curvature heat equation and the
convex-potential identity.  Section~\ref{sec:dissipation} establishes
curvature convergence and its combinatorial description.
Section~\ref{sec:weights} studies limiting normalized weights,
convergence criteria, and the comparison of trajectories with different
positive initial metrics.

\section{The inverse-weight model on a tree}\label{sec:setup}

Let $T=(V,E)$ be a finite tree with at least one edge, let $d_x$ denote the
combinatorial degree of $x$, and assign each edge $e$ a length $w_e>0$.  These lengths induce the path metric on
$V$.  We write $\one$ for the all-one vector indexed by $E$.  All inner products
and norms below are Euclidean.  The open and closed normalized simplices are
\begin{equation*}
 \Delta_E:=\left\{q=(q_e)_{e\in E}:q_e>0,\ \sum_{e\in E}q_e=1\right\},
 \qquad
 \overline\Delta_E:=\left\{q=(q_e)_{e\in E}:q_e\ge0,\ \sum_{e\in E}q_e=1\right\}.
\end{equation*}
For $x\in V$ put
\begin{equation*}
        D_x=D_x(w):=\sum_{e\ni x}\frac1{w_e}.
\end{equation*}
For $\alpha\in[0,1)$ define the lazy measure based at $x$ by
\[
 m_x^\alpha(x)=\alpha,
 \qquad
 m_x^\alpha(z)=(1-\alpha)\frac{w_{xz}^{-1}}{D_x}\quad(z\sim x).
\]
Let $W_1$ be the $1$-Wasserstein distance for the induced path metric.  If $e=xy$,
its Lin--Lu--Yau curvature is
\[
 \kappa_e=\lim_{\alpha\uparrow1}
 \frac{1-W_1(m_x^\alpha,m_y^\alpha)/w_e}{1-\alpha}.
\]
For the inverse-weight kernel, the following weighted tree formula was proved in
\cite{BaiEtAl2024,BaiHuaLinLiu2025}.

\begin{proposition}\label{prop:treeformula}
For every edge $e=xy$,
\begin{equation*}
  \kappa_e
  =\frac{2-d_x}{w_eD_x}+\frac{2-d_y}{w_eD_y}.
\end{equation*}
In particular, curvature is invariant under the common rescaling $w\mapsto sw$.
Moreover,
\[
 |\kappa_{xy}|\le |2-d_x|+|2-d_y|,
\]
so the curvatures are bounded by a constant depending only on the tree.
\end{proposition}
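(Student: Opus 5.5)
The formula follows from computing the Lin--Lu--Yau limit directly, using the fact that on a tree the optimal transport problem is one-dimensional along each branch. Fix $e=xy$, and for $\alpha$ close to $1$ write $\mu=m_x^\alpha$ and $\nu=m_y^\alpha$. Removing $e$ splits $T$ into the component $T_x$ containing $x$ and the component $T_y$ containing $y$. The two measures live on the stars of $x$ and $y$. Mass at a neighbor $z\ne y$ of $x$ sits at distance $w_{xz}$ from $x$, and mass at a neighbor $u\ne x$ of $y$ sits at distance $w_{yu}$ from $y$.

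\emph{Step 1: the transport cost.} On a tree, $W_1(\mu,\nu)=\sum_{f}w_f\,|(\mu-\nu)(S_f)|$, where $S_f$ is one side of the cut at $f$. This is the standard cut formula: a tree path metric is $\ell^1$-embeddable, so the Kantorovich dual is attained edge by edge. For an edge $xz$ with $z\ne y$, only $\mu$ puts mass beyond it, and that mass is $(1-\alpha)w_{xz}^{-1}/D_x$. Hence the contribution is $w_{xz}\cdot(1-\alpha)w_{xz}^{-1}/D_x=(1-\alpha)/D_x$. Summing over the $d_x-1$ such edges gives $(1-\alpha)(d_x-1)/D_x$, and symmetrically the edges at $y$ give $(1-\alpha)(d_y-1)/D_y$. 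Across $e$, the net mass on the $T_x$ side is
\[
\mu(T_x)-\nu(T_x)=1-(1-\alpha)\frac{w_e^{-1}}{D_x}-(1-\alpha)\frac{w_e^{-1}}{D_y},
\]
which is positive once $\alpha$ is close to $1$. Its contribution to the cost is therefore $w_e$ times this quantity.

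\emph{Step 2: the limit.} Dividing by $w_e$ gives
\[
\frac{W_1}{w_e}=1-(1-\alpha)\Bigl[\frac{1}{w_eD_x}+\frac1{w_eD_y}-\frac{d_x-1}{w_eD_x}-\frac{d_y-1}{w_eD_y}\Bigr].
\]
Hence $(1-W_1/w_e)/(1-\alpha)$ is constant in $\alpha$ near $1$ and equals $\frac{2-d_x}{w_eD_x}+\frac{2-d_y}{w_eD_y}$, which is the claimed formula.

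\emph{Step 3: consequences.} Under $w\mapsto sw$ we have $w_eD_x\mapsto w_eD_x$, since $w_e$ scales by $s$ and $D_x$ by $s^{-1}$; so $\kappa$ is unchanged. For the bound, $D_x\ge w_e^{-1}$ because the term for $e$ itself appears in $D_x$. Thus $0<1/(w_eD_x)\le1$, and the triangle inequality gives $|\kappa_{xy}|\le|2-d_x|+|2-d_y|$.

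The main obstacle is Step 1. One must justify that the cut formula actually gives $W_1$, not just an upper bound. The cleanest route is to exhibit a matching lower bound from Kantorovich duality: take a $1$-Lipschitz $\phi$ with $\phi$ increasing by $w_f$ along each edge in the direction of net flow. On a tree such a $\phi$ is consistent because there are no cycles. One must also check the sign condition across $e$ for $\alpha$ near $1$, so that no cancellation occurs. Alternatively, this step can simply be quoted from \cite{BaiEtAl2024,BaiHuaLinLiu2025}.
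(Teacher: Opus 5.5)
Your proof is correct, and it differs from the paper mainly in that it actually proves the formula. The paper gives no derivation: it imports the formula from \cite{BaiEtAl2024,BaiHuaLinLiu2025}. Only the two consequences are left to the reader, and they follow exactly as in your Step~3: $w_eD_x$ is invariant under $w\mapsto sw$, and $w_eD_x\ge1$.

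Your Steps~1--2 replace the citation with a self-contained computation via the edge-cut formula for $W_1$ on a tree, and the bookkeeping checks out.
\begin{itemize}
\item The edges $xz$ ($z\ne y$) and $yu$ ($u\ne x$) contribute $(1-\alpha)(d_x-1)/D_x$ and $(1-\alpha)(d_y-1)/D_y$.
\item Every edge not incident to $x$ or $y$ has both supports on the same side, so it contributes nothing.
\item The net mass across $e$ is at least $2\alpha-1$, because $w_e^{-1}/D_x\le1$ and $w_e^{-1}/D_y\le1$. So the sign condition holds for every $\alpha>1/2$.
\end{itemize}
Your route buys two things the citation does not. First, the difference quotient is exactly constant for $\alpha>1/2$, so the limit defining $\kappa_e$ is trivial. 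Second, it makes explicit that the tree hypothesis enters only through uniqueness of paths, that is, through the cut formula.

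One small correction to your assessment of Step~1: you have the directions reversed. The cut sum is automatically a \emph{lower} bound for every coupling. The length of each path splits over its edges, and each edge $f$ must carry at least $|(\mu-\nu)(S_f)|$ of mass. The direction that needs a construction is the upper bound. Here it is supplied by the obvious three-stage plan:
\begin{enumerate}
\item move the mass at each $z\ne y$ into $x$;
\item push the surplus $1-(1-\alpha)\bigl(w_e^{-1}/D_x+w_e^{-1}/D_y\bigr)$ across $e$;
\item spread from $y$ to the neighbors $u\ne x$.
\end{enumerate}
These three stages cost exactly your three groups of terms, so the triangle inequality for $W_1$ gives $W_1\le$ your sum. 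Your Kantorovich potential is also a valid proof of the lower bound, so nothing is missing once the upper bound is stated this way.
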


\begin{proposition}\label{prop:GB}
For every positive metric on a finite tree,
\begin{equation*}
             \sum_{e\in E}\kappa_e=2.
\end{equation*}
\end{proposition}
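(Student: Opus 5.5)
The plan is to sum the tree formula of Proposition~\ref{prop:treeformula} over all edges and reorganize the double sum by vertices instead of by edges. Each edge $e=xy$ contributes one term for each of its two endpoints, so
\[
 \sum_{e\in E}\kappa_e=\sum_{e\in E}\sum_{x\in e}\frac{2-d_x}{w_eD_x}
 =\sum_{x\in V}\frac{2-d_x}{D_x}\sum_{e\ni x}\frac1{w_e}.
\]
By the definition of $D_x$, the inner sum equals $D_x$, and it is positive because every vertex of a tree with at least one edge has degree at least one. The weights therefore cancel completely, and we are left with $\sum_{x\in V}(2-d_x)$.

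It remains to evaluate this purely combinatorial quantity. By the handshake lemma $\sum_x d_x=2|E|$, and for a finite tree $|E|=|V|-1$. Hence
\[
 \sum_{x\in V}(2-d_x)=2|V|-2|E|=2.
\]

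There is no genuine obstacle. The only points to check are that the interchange of summation is legitimate, which holds since the sums are finite, and that $D_x>0$ for every vertex. The latter needs $T$ to have at least one edge, which is assumed in Section~\ref{sec:setup}. The identity $|E|=|V|-1$ is where the tree hypothesis enters.
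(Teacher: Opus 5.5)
Your argument is correct and is essentially the paper's own route. The paper states Proposition~\ref{prop:GB} without a separate proof, but the same regrouping of the tree formula by vertices appears in \eqref{eq:subsetcurvature} with $A=E$: each vertex contributes $2-d_x$, and the total is $\beta(E)=2|V|-2|E|=2$, which is how the paper later uses $c(E)=2=\beta(E)$.
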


\begin{definition}\label{def:support}
For a normalized trajectory, let
\begin{equation*}
\omega(w):=\left\{w^*\in\overline\Delta_E:
 \text{there is a sequence }t_j\to\infty\text{ such that }w(t_j)\to w^*\right\}.
\end{equation*}

The elements of $\omega(w)$ are called the omega-limit points of the
normalized trajectory.
The convergence here is in $\R^E$; equivalently, along the same sequence
$(t_j)$ one has $w_e(t_j)\to w_e^*$ for every $e\in E$.  Since
$\overline\Delta_E$ is compact, $\omega(w)$ is nonempty.
For $w^*\in\omega(w)$, its positive support is
\[
 E^+(w^*):=\{e\in E:w_e^*>0\}.
\]
An internal edge $e$ with $w_e^*=0$ is called a \emph{cutting edge of $w^*$} when
each component of $T\setminus\{e\}$ contains an edge of $E^+(w^*)$.
\end{definition}

\begin{remark}\label{rem:boundarymetric}
The equality $w_e^*=0$ only gives $w_e(t_j)\to0$ along a sequence defining $w^*$;
the same edge may have positive weight at another  omega-limit point.  A cutting edge
of $w^*$ need not tend to zero along the full trajectory.  In the flat
caterpillar case of \cite{BaiHuaLinLiu2025}, every normalized leaf weight does tend
to zero.  Internal cutting edges may also occur at a boundary  omega-limit point and
split its positive support into several components.
\end{remark}

\section{Curvature diffusion and the convex potential}\label{sec:heat}

\subsection{The curvature heat equation}

Let $e\ne f$ share a vertex $x$.  Define
\begin{equation}\label{eq:conductance}
 a_{ef}^{x}(w):=\frac{d_x-2}{D_x^2w_ew_f}
 \quad\text{if }d_x\ge3,
 \qquad a_{ef}^{x}:=0\quad\text{if }d_x\le2.
\end{equation}
Two distinct edges of a tree share at most one vertex, so we write $a_{ef}=a_{ef}^x$
when they meet at $x$, and $a_{ef}=0$ otherwise.  Let $\cL(w)$ be the matrix
\begin{equation}\label{eq:L}
  (\cL z)_e=\sum_{f\ne e}a_{ef}(z_e-z_f).
\end{equation}
The matrix $\cL$ is the weighted Laplacian of the graph whose vertices are $E$ and whose
adjacencies are pairs of tree edges meeting at a vertex of degree at least three.  We
call this graph the \emph{conductive line graph}.

\begin{proof}[\normalfont\bfseries Proof of Theorem~\ref{thm:heat}]
Fix $e=xy$ and write its $x$-endpoint contribution as
\[
       h_{x,e}=\frac{2-d_x}{w_eD_x}.
\]
For the unnormalized flow,
\[
 \frac{\dot D_x}{D_x}
 =\frac1{D_x}\sum_{f\ni x}\frac{\kappa_f}{w_f}
 =:\bar\kappa_x.
\]
Direct differentiation gives
\[
 \dot h_{x,e}
 =\frac{d_x-2}{w_eD_x}(\bar\kappa_x-\kappa_e)
 =\sum_{\substack{f\ni x\\f\ne e}}
   \frac{d_x-2}{D_x^2w_ew_f}(\kappa_f-\kappa_e).
\]
When $d_x\le2$ the same formula is zero: for $d_x=2$ the coefficient vanishes,
whereas a leaf has no second incident edge.  Adding the two endpoint contributions
proves \eqref{eq:heat}.  For the normalized flow,
$\dot w_e/w_e=-\kappa_e+K$ and $\dot D_x/D_x=\bar\kappa_x-K$; the two $K$ terms cancel,
so the calculation is identical.  The symmetry $a_{ef}=a_{fe}$ follows from
\eqref{eq:conductance}, and \eqref{eq:L} shows that every row sum is zero.  Moreover,
\begin{align*}
 \ip{z}{\cL z}
 &=\sum_e\sum_{f\ne e}a_{ef}z_e(z_e-z_f)\\
 &=\frac12\sum_e\sum_{f\ne e}a_{ef}(z_e-z_f)^2\\
 &=\sum_{\{e,f\}:a_{ef}>0}a_{ef}(z_e-z_f)^2.
\end{align*}
In the second equality we have paired the terms indexed by $(e,f)$ and $(f,e)$.
This proves \eqref{eq:quadratic}.
\end{proof}

\begin{remark}\label{rem:insulators}
The support of $\cL(w)$ is independent of the positive metric, but its conductances
are not.  A degree-two vertex contributes no conductance to $\cL(w)$, although it
remains part of the path metric.  At a vertex of degree at least three, a collapsing
incident edge can make $D_x\to\infty$ and force some $a_{ef}\to0$.  The matrix $\cL(w)$
need not remain uniformly elliptic as $t\to\infty$.
\end{remark}

\subsection{The convex potential}

Write $y_e=\log w_e$ and define
\begin{equation*}
 \Phi(y):=\sum_{x\in V}(d_x-2)\log D_x(y),
 \qquad
 D_x(y):=\sum_{e\ni x}e^{-y_e}.
\end{equation*}

\begin{proposition}\label{prop:potential}
For every $e\in E$,
\begin{equation}\label{eq:gradPhi}
                    \frac{\partial\Phi}{\partial y_e}=\kappa_e,
 \qquad             \nabla^2\Phi(y)=\cL(w)\ge0.
\end{equation}
The function $\Phi$ is convex and the unnormalized logarithmic flow is
$\dot y=-\nabla\Phi$.  If $P=I-m^{-1}\one\one^T$, then along either the unnormalized flow \eqref{eq:unnormalized} or the normalized flow \eqref{eq:normalized},
\begin{equation}\label{eq:projectedgradient}
                     \frac{d}{dt}(Py)=-P\nabla\Phi(y).
\end{equation}
The kernel of $\nabla^2\Phi$ consists of the vectors $v$ such that $v_e=v_f$
whenever $e$ and $f$ meet at a vertex of degree at least three.  Equivalently,
$v$ is constant on each connected component of the conductive line graph.  In particular,
if the conductive line graph is connected, then $\Phi$ is strictly convex on
$\one^\perp$.
\end{proposition}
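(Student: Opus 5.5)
The plan is to compute the gradient of $\Phi$ directly and check that it matches Proposition~\ref{prop:treeformula}. Then I will identify the Hessian with $\cL(w)$ by re-using the differentiation done in the proof of Theorem~\ref{thm:heat}. The flow statements and the kernel description then follow from these two formulas.

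\textbf{Gradient.} Since $\partial D_x/\partial y_e=-e^{-y_e}=-w_e^{-1}$ when $e\ni x$, and is zero otherwise, we get
\[
\frac{\partial\Phi}{\partial y_e}=\sum_{x\in e}(d_x-2)\,\frac{-w_e^{-1}}{D_x}=\sum_{x\in e}\frac{2-d_x}{w_eD_x},
\]
which is $\kappa_e$ by Proposition~\ref{prop:treeformula}.

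\textbf{Unnormalized flow.} Under \eqref{eq:unnormalized} we have $\dot y_e=\dot{\widetilde w}_e/\widetilde w_e=-\kappa_e$, so $\dot y=-\nabla\Phi$.

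\textbf{Normalized flow.} Under \eqref{eq:normalized}, $\dot y=-\kappa+K\one$. Since $P\one=0$, this gives $\frac{d}{dt}(Py)=-P\kappa=-P\nabla\Phi$, which is \eqref{eq:projectedgradient}.

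\textbf{Hessian.} Rather than redo the second derivatives, I will differentiate $h_{x,e}=(2-d_x)/(w_eD_x)$ with respect to $y_f$.
\begin{itemize}
\item For $f\ne e$ with $f\ni x$: $\partial h_{x,e}/\partial y_f=-(d_x-2)/(w_ew_fD_x^2)=-a^x_{ef}$.
\item For $f=e$: the derivative is $(d_x-2)/(w_eD_x)-(d_x-2)/(w_e^2D_x^2)=\sum_{f\ni x,f\ne e}a^x_{ef}$, using $D_x-w_e^{-1}=\sum_{f\ne e}w_f^{-1}$.
\end{itemize}
These derivatives are exactly what the proof of Theorem~\ref{thm:heat} computes, now read with $\dot y$ arbitrary. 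The off-diagonal entries are $-a_{ef}$ and the diagonal entries are the row sums, so $\nabla^2\Phi=\cL(w)$.

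Vertices with $d_x\le2$ need a brief check, which I expect to be the only mildly delicate step. For $d_x=2$ the coefficient $d_x-2$ vanishes. For a leaf, $d_x-2=-1$, but $\log D_x=-y_e$ is linear and so contributes nothing to the Hessian. This is consistent with $a^x=0$ for leaves.

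\textbf{Convexity and kernel.} Positive semidefiniteness, and hence convexity of $\Phi$, follows from \eqref{eq:quadratic}. Because every $a_{ef}>0$ on the edges of the conductive line graph, $\ip{v}{\cL v}=0$ holds exactly when $v_e=v_f$ for every adjacent pair $e,f$. In other words, $v$ is constant on each component of that graph.

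If the conductive line graph is connected, the kernel is $\mathrm{span}(\one)$. Then $\cL(w)$ is positive definite on $\one^\perp$ at every point, so $\Phi$ restricted to $\one^\perp$ has a positive definite Hessian everywhere and is therefore strictly convex there.
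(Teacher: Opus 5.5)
Your proposal is correct and follows essentially the paper's route: the same direct gradient and mixed-derivative computations give $\nabla\Phi=\kappa$ and $\nabla^2\Phi=\cL(w)$, with leaves and degree-two vertices handled the same way, and $P\one=0$ gives the flow identities. The only difference is in the kernel step. You read it off from \eqref{eq:quadratic} using $a_{ef}>0$ and the fact that a symmetric positive semidefinite matrix annihilates exactly the zeros of its quadratic form. The paper instead re-derives the same identity as a weighted sum of local variances coming from the Hessian of $\log D_x$.
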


\begin{proof}
If $e\ni x$, then
\[
 \frac{\partial}{\partial y_e}(d_x-2)\log D_x
 =-(d_x-2)\frac{e^{-y_e}}{D_x}
 =\frac{2-d_x}{w_eD_x}.
\]
Summing over the two endpoints of $e$ gives the first identity in
\eqref{eq:gradPhi}.  For distinct edges $e,f$ meeting at $x$, the mixed derivative
is
\[
 -(d_x-2)\frac{e^{-y_e}e^{-y_f}}{D_x^2}=-a_{ef}.
\]
The diagonal entries are the corresponding row sums, which proves
$\nabla^2\Phi=\cL(w)$.  Theorem~\ref{thm:heat} now gives positive semidefiniteness.
The equations for the normalized and unnormalized flows differ in logarithmic
coordinates only by a multiple of $\one$, so projection by $P$ gives
\eqref{eq:projectedgradient}.

We next determine the kernel.  Fix a vertex $x$ with $d_x\ge3$ and, for
$e\in E(x)$, put
\[
                       p_e^x:=\frac{e^{-y_e}}{D_x}.
\]
Thus $p_e^x>0$ and $\sum_{e\in E(x)}p_e^x=1$.  On the coordinates indexed by
$E(x)$, direct differentiation gives
\[
 \frac{\partial^2\log D_x}{\partial y_e\partial y_f}
 =
 \begin{cases}
  p_e^x(1-p_e^x),&e=f,\\
  -p_e^xp_f^x,&e\ne f.
 \end{cases}
\]
In matrix form this is
\[
             \operatorname{diag}(p^x)-p^x(p^x)^T.
\]
For any $v\in\R^E$, its quadratic form is
\begin{align*}
 \sum_{e\in E(x)}p_e^xv_e^2
 -\left(\sum_{e\in E(x)}p_e^xv_e\right)^2
 &=\frac12\sum_{e,f\in E(x)}p_e^xp_f^x(v_e-v_f)^2\\
 &\ge0.
\end{align*}
This is the variance of the numbers $(v_e)_{e\in E(x)}$ with probabilities
$(p_e^x)_{e\in E(x)}$.  Since every $p_e^x$ is positive, it vanishes exactly when
$v_e=v_f$ for all $e,f\in E(x)$.

The local term in $\Phi$ is multiplied by $d_x-2>0$.  A vertex of degree two
contributes nothing, while at a leaf $x$ with incident edge $e$ one has
$-(\log D_x)=y_e$, which is affine and has zero Hessian.  Consequently,
\[
 v^T\nabla^2\Phi(y)v
 =\frac12\sum_{\substack{x\in V\\d_x\ge3}}(d_x-2)
   \sum_{e,f\in E(x)}p_e^xp_f^x(v_e-v_f)^2.
\]
This formula proves positive semidefiniteness and shows that $v$ belongs to the
kernel precisely when its coordinates are equal on the incident edges of every
vertex of degree at least three.  These are exactly the edges joined in the
conductive line graph.  If that graph is connected, the kernel is
$\operatorname{span}\{\one\}$; hence the restriction of the Hessian to
$\one^\perp$ is positive definite, and $\Phi$ is strictly convex there.
\end{proof}

\begin{remark}\label{rem:nonproper}
Strict convexity does not imply coercivity.  A centered logarithmic trajectory may
leave every compact subset of $\one^\perp$ as some normalized edge weights tend to
zero.  Proposition~\ref{prop:potential} therefore does not by itself imply
convergence of the normalized metric.
\end{remark}

\section{Curvature convergence and its combinatorial description}
\label{sec:dissipation}

\subsection{Maximum principle and dissipation}

 Write $m=|E|,$ and define the average curvature as $\bar\kappa:=1/m\sum_{e\in E}\kappa_e$, which equals to $2/m$ by
Proposition~\ref{prop:GB}.  Define
\begin{equation*}
 V(t):=\sum_{e\in E}(\kappa_e-\bar\kappa)^2,
 \qquad
 \cE(t):=\ip{\kappa}{\cL(w)\kappa}.
\end{equation*}

\begin{proposition}\label{prop:diss}
Along the flow,
\begin{enumerate}[(i)]
\item $\max_e\kappa_e$ is nonincreasing and $\min_e\kappa_e$ is nondecreasing;
\item
\begin{equation}\label{eq:Fdecay}
 \frac{d}{dt}\frac12\sum_e\kappa_e^2=-\cE(t),
 \qquad
 V'(t)=-2\cE(t);
\end{equation}
\item $V$ has a limit and $\int_0^\infty\cE(t)\,dt<\infty$.
\end{enumerate}
\end{proposition}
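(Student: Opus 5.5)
The plan is to derive all three items directly from the heat equation \eqref{eq:heat} of Theorem~\ref{thm:heat}, using the explicit form \eqref{eq:L} of $\cL(w)$ together with Proposition~\ref{prop:GB}.

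\emph{Item (i).} Write \eqref{eq:heat} componentwise as
\[
 \dot\kappa_e=\sum_{f\ne e}a_{ef}(\kappa_f-\kappa_e),
\]
where every $a_{ef}\ge0$ for a positive metric. At any time $t$, let $e$ be an edge realizing $\max_f\kappa_f(t)$. Then each term $\kappa_f-\kappa_e$ is $\le0$, so $\dot\kappa_e(t)\le0$. To turn this pointwise statement into monotonicity of $M(t):=\max_e\kappa_e(t)$, I will use the standard Danskin-type argument. $M$ is a maximum of finitely many $C^1$ functions, since $\kappa$ is smooth in $w$ by Proposition~\ref{prop:treeformula} and the flow is smooth. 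Hence its upper right Dini derivative equals $\max\{\dot\kappa_e(t):\kappa_e(t)=M(t)\}$, which is $\le0$. A continuous function with nonpositive upper Dini derivative everywhere is nonincreasing. The argument for $\min_e\kappa_e$ is symmetric.

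\emph{Item (ii).} I compute
\[
 \frac{d}{dt}\frac12\sum_e\kappa_e^2=\ip{\kappa}{\dot\kappa}=-\ip{\kappa}{\cL\kappa}=-\cE.
\]
By Proposition~\ref{prop:GB}, $\sum_e\kappa_e=2$, so $\bar\kappa=2/m$ is constant in time. Therefore
\[
 V=\sum_e\kappa_e^2-m\bar\kappa^2=\sum_e\kappa_e^2-\frac{4}{m},
\]
and hence $V'=-2\cE$. Alternatively, $\cL\one=0$ shows directly that the constant shift by $\bar\kappa$ does not change the quadratic form.

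\emph{Item (iii).} By \eqref{eq:quadratic}, $\cE\ge0$, so $V$ is nonincreasing; it is also bounded below by $0$. Hence $V(t)$ converges as $t\to\infty$. Integrating the identity $V'=-2\cE$ gives
\[
 2\int_0^T\cE\,dt=V(0)-V(T)\le V(0).
\]
Since this bound is uniform in $T$, we conclude $\int_0^\infty\cE\,dt<\infty$.

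\emph{Remaining justification.} The only point requiring care is that the solution exists for all $t\ge0$ and stays positive, so that $\cL(w)$ is defined with nonnegative conductances throughout. This follows because the curvatures are bounded by a tree constant (Proposition~\ref{prop:treeformula}). Consequently $\log\widetilde w_e$ has bounded derivative, which rules out blow-up or collapse in finite time, and the normalized flow inherits this property. With that in place, the main obstacle is the nonsmooth maximum in (i), and the Dini-derivative argument above handles it.
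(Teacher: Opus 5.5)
Your proposal is correct and follows essentially the paper's argument. For (i) you evaluate the heat equation at an extremal edge, for (ii) you compute $\ip{\kappa}{\dot\kappa}=-\ip{\kappa}{\cL\kappa}$, and for (iii) you use monotonicity of $V$ and integrate $V'=-2\cE$. The differences are cosmetic: you use the right Dini derivative of a finite maximum where the paper uses a.e.\ differentiability and local absolute continuity, and you use the constancy $\bar\kappa=2/m$ to write $V=\sum_e\kappa_e^2-4/m$ where the paper uses $\dot q=-\cL q$ for $q=\kappa-\bar\kappa\one$.
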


\begin{proof}
Set
\[
 K_{\max}(t):=\max_{e\in E}\kappa_e(t),
 \qquad
 K_{\min}(t):=\min_{e\in E}\kappa_e(t).
\]
Both functions are locally absolutely continuous, since each is the maximum or
minimum of finitely many $C^1$ functions.  Let $t$ be a differentiability point of
$K_{\max}$, and choose $e$ such that $\kappa_e(t)=K_{\max}(t)$.  The function
$K_{\max}-\kappa_e$ is nonnegative and vanishes at $t$, and is differentiable
there; hence $K_{\max}'(t)=\dot\kappa_e(t)$.  By \eqref{eq:heat},
\[
 K_{\max}'(t)=\sum_{f\ne e}a_{ef}(t)
             \bigl(\kappa_f(t)-\kappa_e(t)\bigr)\le0,
\]
because $a_{ef}\ge0$ and $\kappa_f(t)\le\kappa_e(t)$ for every $f$.  Thus
$K_{\max}'\le0$ almost everywhere, and local absolute continuity shows that
$K_{\max}$ is nonincreasing. By a similar argument, one proves that $K_{\min}$ is nondecreasing. Both extrema
are bounded by Proposition~\ref{prop:treeformula}, so each has a finite limit.


For the first identity in \eqref{eq:Fdecay}, the heat equation gives
\begin{align*}
 \frac{d}{dt}\frac12\sum_{e\in E}\kappa_e^2
 &=\sum_{e\in E}\kappa_e\dot\kappa_e
  =\ip{\kappa}{\dot\kappa} \\
 &=-\ip{\kappa}{\cL\kappa}
  =-\cE(t).
\end{align*}
In particular, \eqref{eq:quadratic} shows that $\cE(t)\ge0$.

Now put $q=\kappa-\bar\kappa\one$.  
Since $\cL\one=0$, we have
\[
 \dot q=\dot\kappa=-\cL\kappa=-\cL q.
\]
Differentiating $V$ gives
\begin{align*}
 V'(t)
 &=2\ip{q}{\dot q}
  =-2\ip{q}{\cL q} \\
 &=-2\ip{\kappa-\bar\kappa\one}{\cL\kappa}
  =-2\ip{\kappa}{\cL\kappa}
  =-2\cE(t),
\end{align*}
where the penultimate equality follows from
$\ip{\one}{\cL\kappa}=\ip{\cL\one}{\kappa}=0$.
$V$ is nonincreasing and bounded below by zero, so it has a finite limit
$V_\infty$.  Integrating the last identity over $[0,T]$ and then letting
$T\to\infty$ yields
\[
 \int_0^\infty\cE(t)\,dt
 =\frac12\bigl(V(0)-V_\infty\bigr)<\infty.
\]
This proves (iii).
\end{proof}

\begin{proposition}\label{prop:Ezero}
For every positive initial metric,
\begin{equation*}
                         \lim_{t\to\infty}\cE(t)=0.
\end{equation*}
Consequently, if a conductive pair $e,f$ satisfies
$\liminf_{t\to\infty}a_{ef}(t)>0$, then
$\kappa_e(t)-\kappa_f(t)\to0$.
\end{proposition}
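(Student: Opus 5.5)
Since $\int_0^\infty\cE(t)\,dt<\infty$ by Proposition~\ref{prop:diss}(iii) and $\cE\ge0$, it suffices to show that $\cE$ is uniformly continuous on $[0,\infty)$; a standard Barbalat-type argument then gives $\cE(t)\to0$. I will in fact show that $|\dot\cE(t)|$ is bounded by a constant depending only on the tree. The consequence then follows at once: each term of \eqref{eq:quadratic} is nonnegative, so $a_{ef}(t)(\kappa_e-\kappa_f)^2\le\cE(t)\to0$. If $a_{ef}(t)\ge a>0$ for all large $t$, then $(\kappa_e-\kappa_f)^2\le\cE(t)/a\to0$.

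The key observation is that the conductances are uniformly bounded along any positive metric. For $e,f$ meeting at $x$ with $d_x\ge3$, write $p_e^x=w_e^{-1}/D_x\in(0,1]$. Then
\[
a_{ef}=(d_x-2)\,p_e^xp_f^x\le d_x-2 .
\]
The curvatures are bounded by Proposition~\ref{prop:treeformula}, so $\kappa$, $\dot\kappa=-\cL\kappa$, and the entries of $\cL$ are all uniformly bounded.

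It remains to bound $\dot a_{ef}$. Along either flow,
\[
\frac{d}{dt}\log p_e^x=-\frac{\dot w_e}{w_e}-\frac{\dot D_x}{D_x}.
\]
In the computation of Theorem~\ref{thm:heat}, this equals $\kappa_e-\bar\kappa_x$ (the $K$ terms cancel in the normalized case). Here $\bar\kappa_x$ is a convex combination of curvatures, so it is bounded. Hence
\[
|\dot a_{ef}|\le a_{ef}\bigl(|\dot p_e^x|/p_e^x+|\dot p_f^x|/p_f^x\bigr),
\]
which is uniformly bounded.

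Now
\[
\dot\cE=\sum a_{ef}\cdot 2(\kappa_e-\kappa_f)(\dot\kappa_e-\dot\kappa_f)+\sum\dot a_{ef}(\kappa_e-\kappa_f)^2 ,
\]
and every factor is bounded by the estimates above. Therefore $\cE$ is Lipschitz. Together with integrability and nonnegativity, this forces $\cE(t)\to0$: if $\cE(t_j)\ge\varepsilon$ along some sequence $t_j\to\infty$, then $\cE\ge\varepsilon/2$ on intervals of fixed length around each $t_j$, contradicting finiteness of the integral.

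The main obstacle is uniform control of $\dot a_{ef}$ even though edge weights may degenerate. This is handled by working with the normalized probabilities $p_e^x$ rather than with $w_e$ and $D_x$ separately.
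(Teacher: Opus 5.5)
Your proposal is correct and follows essentially the same route as the paper. Uniform bounds on the conductances and on $\dot a_{ef}/a_{ef}=\kappa_e+\kappa_f-2\bar\kappa_x$ make $\dot\cE$ bounded, integrability plus uniform continuity (Barbalat) gives $\cE(t)\to0$, and $a_{ef}(\kappa_e-\kappa_f)^2\le\cE$ yields the pairwise statement; the only differences are cosmetic: you bound $a_{ef}=(d_x-2)p_e^xp_f^x\le d_x-2$ instead of using AM--GM, and you prove the Barbalat step directly rather than citing it.
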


\begin{proof}
For a conductive pair $e,f$ meeting at $x$, so that $d_x\ge3$, the
arithmetic--geometric mean inequality gives
\[
 D_x\ge \frac1{w_e}+\frac1{w_f}\ge\frac2{\sqrt{w_ew_f}},
 \qquad
 0<a_{ef}\le\frac{d_x-2}{4}.
\]
Since the tree is finite, this bounds $\cL(w)$ uniformly in time.  Direct
differentiation gives
\[
 \frac{\dot a_{ef}}{a_{ef}}
 =\kappa_e+\kappa_f-2\bar\kappa_x,
 \qquad
 \bar\kappa_x:=\frac1{D_x}\sum_{g\ni x}\frac{\kappa_g}{w_g}.
\]
The number $\bar\kappa_x$ is a convex combination of the incident curvatures.
The curvature formula therefore bounds $\kappa$ and $\bar\kappa_x$ uniformly.
Since $a_{ef}$ is also uniformly bounded, the displayed identity gives a uniform
bound for $\dot a_{ef}$.  Differentiating
$\cE=\ip{\kappa}{\cL\kappa}$ and using
\eqref{eq:heat}, we obtain
\[
 \dot\cE
 =-2\norm{\cL\kappa}^2
 +\sum_{\{e,f\}:a_{ef}>0}
   a_{ef}(\kappa_e-\kappa_f)^2
   (\kappa_e+\kappa_f-2\bar\kappa_x).
\]
Hence $\dot\cE$ is bounded, and $\cE$ is uniformly continuous, while
Proposition~\ref{prop:diss} gives $\cE\ge0$ and
$\cE\in L^1(0,\infty)$. By Barbalat's Lemma, $\cE(t)\to0$ as $t\to \infty.$


Finally,
\[
 a_{ef}(t)(\kappa_e(t)-\kappa_f(t))^2\le\cE(t),
\]
which proves the last assertion.
\end{proof}

\subsection{Convergence of edge curvatures and consequences}

\begin{lemma}\label{lem:orderedlimits}
Let $f_1,\ldots,f_N$ be bounded continuous functions on $[0,\infty)$.  For
$1\leq k\leq N$, set
\[
 u_k(t):=\min_{1\leq i_1<i_2<\cdots<i_k\leq N}
 \max\{f_{i_1}(t),f_{i_2}(t),\ldots,f_{i_k}(t)\}.
\]
If $u_k(t)$ has a limit as $t\to\infty$ for every $1\leq k\leq N$, then so
does $f_i(t)$ for every $1\leq i\leq N$.
\end{lemma}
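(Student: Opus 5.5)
The key observation is that $u_k(t)$ is the $k$-th smallest value among $f_1(t),\ldots,f_N(t)$, counted with multiplicity. Write $f_{(1)}(t)\le\cdots\le f_{(N)}(t)$ for the sorted values. A $k$-element index set has maximum at least $f_{(k)}(t)$, and the indices of the $k$ smallest values attain it; hence $u_k(t)=f_{(k)}(t)$. By hypothesis, $u_k(t)\to\mu_k$ for each $k$, with $\mu_1\le\cdots\le\mu_N$. Let $\nu_1<\cdots<\nu_s$ be the distinct values among the $\mu_k$, and let $n_j=\#\{k:\mu_k=\nu_j\}$.

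Next we show that each individual $f_i$ eventually stays near a single $\nu_j$. Fix
\[
\varepsilon<\tfrac13\min_j(\nu_{j+1}-\nu_j),
\]
and choose $T$ so that $|u_k(t)-\mu_k|<\varepsilon$ for all $k$ and all $t\ge T$. Since $\{f_1(t),\ldots,f_N(t)\}=\{u_1(t),\ldots,u_N(t)\}$ as multisets, every $f_i(t)$ with $t\ge T$ lies in the union of the disjoint intervals $I_j=(\nu_j-\varepsilon,\nu_j+\varepsilon)$. Each $f_i$ is continuous, so $f_i([T,\infty))$ is connected. A connected set contained in a disjoint union of open intervals lies in exactly one of them. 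Thus there is an index $j(i)$ with $f_i(t)\in I_{j(i)}$ for all $t\ge T$. This connectedness step is the main point; the bookkeeping around it is routine.

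Now let $\varepsilon'\to0$. For each smaller $\varepsilon'\le\varepsilon$ we get a time $T'\ge T$ with $f_i(t)\in(\nu_{j'}-\varepsilon',\nu_{j'}+\varepsilon')$ for $t\ge T'$, for some $j'$. The interval for $\varepsilon'$ is contained in some $I_j$, and it must be $I_{j(i)}$ because $f_i$ stays in $I_{j(i)}$ after $T$. So $j'=j(i)$, and $f_i(t)\to\nu_{j(i)}$. Alternatively, and equivalently, every limit point of $f_i(t)$ as $t\to\infty$ lies in $\{\nu_1,\ldots,\nu_s\}\cap\overline{I_{j(i)}}=\{\nu_{j(i)}\}$. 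Boundedness of $f_i$ guarantees that a limit point exists, so $f_i$ converges.

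We never need the counts $n_j$ or the matching of the $f_i$ with the $u_k$. Continuity of the $f_i$ is what prevents values from swapping between clusters infinitely often. Without it the statement fails: two discontinuous functions could alternate between two limit values while the sorted values still converge.
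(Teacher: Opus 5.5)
Your proof is correct and is essentially the paper's argument: both use that $u_k(t)$ is the $k$-th smallest of the values $f_1(t),\ldots,f_N(t)$, so every $f_i(t)$ is asymptotically near the finite set of limits of the $u_k$, and then use continuity to rule out jumps between distinct limit values. The paper phrases this as connectedness of the omega-limit set $\omega(f_i)$ inside a finite set, whereas you use connectedness of $f_i([T,\infty))$ with an explicit $\varepsilon$-separation (in the degenerate case $s=1$ the gap condition on $\varepsilon$ is vacuous, and any $\varepsilon>0$ works).
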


\begin{proof}
Let $\omega(f)$ be the omega-limit set of a function $f$ on $[0,\infty)$ as
$t\to\infty$, that is,
\[
 \omega(f):=\left\{a\in\R:\text{there is a sequence }t_j\to\infty
 \text{ such that }f(t_j)\to a\right\}.
\]
It suffices to prove that $\omega(f_i)$ is a singleton for every
$1\leq i\leq N$.

By assumption,
\[
 \lim_{t\to\infty}u_k(t)=:b_k.
\]
Write $B=\bigcup_{1\leq k\leq N}\{b_k\}$.

We claim that $\omega(f_i)\subset B$ for every $1\leq i\leq N$.  Fix $i$ and
let $a\in\omega(f_i)$.  Choose a sequence $t_j\to\infty$ such that
$f_i(t_j)\to a$.  For each $t$, the numbers $u_1(t),\ldots,u_N(t)$ are the
values $f_1(t),\ldots,f_N(t)$ arranged in nondecreasing order.  Choose a
permutation $\sigma_t\in\mathcal S_N$, the symmetric group on $N$ letters, such
that
\[
 f_l(t)=u_{\sigma_t(l)}(t),\qquad 1\leq l\leq N.
\]
Since $\mathcal S_N$ is finite, after passing to a subsequence, still denoted by
$t_j$, we may assume that $\sigma_{t_j}=\sigma_0$ for every $j$.  Hence
\[
 a=\lim_{j\to\infty}f_i(t_j)
  =\lim_{j\to\infty}u_{\sigma_0(i)}(t_j)
  =b_{\sigma_0(i)}\in B.
\]
This proves the claim.

The omega-limit set of a bounded continuous function
$f:[0,\infty)\to\R$ is connected, since it is the intersection, over $T\geq0$, of
the nested compact intervals $\overline{f([T,\infty))}$.  Therefore
$\omega(f_i)$ is a connected subset of the finite set $B$, and hence is a
singleton.  This proves the result.
\end{proof}

\begin{proof}[\normalfont\bfseries Proof of Theorem~\ref{thm:kappalimit}]
For $1\le i\le m=|E|$, let
\[
 \kappa_{i}(t):=\min_{\substack{I\subseteq E\\ |I|=i}}
   \max_{e\in I}\kappa_e(t),\quad
 S_i(t):
 =\min_{\substack{I\subseteq E\\ |I|=i}}
   \sum_{e\in I}\kappa_e(t).
\]
By definition, $\kappa_i(t)$ is the $i$-th smallest curvature value, and
$S_i(t)=\sum_{j=1}^i\kappa_j(t)$.
For convenience, we set $S_0\equiv0$.

For each $I\subseteq E$ with $|I|=i$, set
\[
 \phi_I(t):=\sum_{e\in I}\kappa_e(t).
\]
Since every $\kappa_e$ is continuously differentiable, $S_i$ is locally Lipschitz and hence locally absolutely
continuous.

Let $t$ be a differentiable point of $S_i$, and choose a minimizing set $I$, so
that $S_i(t)=\phi_I(t)$.  Since $\phi_I-S_i\ge0$ and
$(\phi_I-S_i)(t)=0$, differentiability gives
\[
 S_i'(t)=\phi_I'(t).
\]
Moreover, the minimality of $I$ implies
\[
 \kappa_e(t)\le\kappa_f(t)
 \qquad(e\in I,\ f\notin I),
\]
since otherwise replacing $e$ by $f$ would decrease $\phi_I(t)$.  Using the
symmetry of $a_{ef}$ and the heat equation, we obtain
\begin{align*}
 S_i'(t)
 &=\sum_{e\in I}\dot\kappa_e(t)\\
 &=\sum_{e\in I}\sum_{f\ne e}
     a_{ef}(t)(\kappa_f(t)-\kappa_e(t))\\
 &=\sum_{\substack{e\in I\\f\notin I}}
     a_{ef}(t)(\kappa_f(t)-\kappa_e(t))
 \ge0.
\end{align*}
Indeed, in the second line the terms with $e,f\in I$ cancel in pairs, since
$a_{ef}=a_{fe}$ and
\[
 a_{ef}(\kappa_f-\kappa_e)
 +a_{fe}(\kappa_e-\kappa_f)=0.
\]
Only the pairs with $e\in I$ and $f\notin I$ remain.  Each remaining summand is
nonnegative by the minimality of $I$.
Thus $S_i'\ge0$ almost everywhere.  Since $S_i$ is locally absolutely continuous,
it is nondecreasing.  Proposition~\ref{prop:treeformula} shows that $S_i$ is bounded,
so $S_i$ has a finite limit.  Therefore
\[
 \kappa_{i}(t)=S_i(t)-S_{i-1}(t)
\]
has a finite limit for every $i$.
Lemma~\ref{lem:orderedlimits}, applied to the family $(\kappa_e)_{e\in E}$, now
shows that $c_e=\lim_{t\to\infty}\kappa_e(t)$ exists for every $e\in E$.

For the unnormalized flow,
\[
 \log\widetilde w_e(t)
 =\log\widetilde w_e(0)-\int_0^t\kappa_e(s)\,ds.
\]
Ces\`aro convergence therefore gives
\[
 \lim_{t\to\infty}\frac1t\log\widetilde w_e(t)=-c_e.
\]
Let
\[
 \widetilde S(t):=\sum_{f\in E}\widetilde w_f(t).
\]
The elementary bounds
\[
 \max_f\widetilde w_f(t)\le\widetilde S(t)
 \le m\max_f\widetilde w_f(t),
\]
give
\[
 \max_f\log\widetilde w_f(t)
 \le\log\widetilde S(t)
 \le\max_f\log\widetilde w_f(t)+\log m.
\]
Since the maximum is taken over finitely many edges,
\[
 \lim_{t\to\infty}\frac1t\log\widetilde S(t)
 =\max_f(-c_f)=-c_*.
\]
Using $w_e=\widetilde w_e/\widetilde S$, we obtain
\[
 \lim_{t\to\infty}\frac1t\log w_e(t)=c_*-c_e.
\]


\end{proof}

\begin{remark}\label{rem:minimalclass}
Theorem~\ref{thm:kappalimit} does not rule out oscillation among the edges of
$E_*$.  If $e,f\in E_*$, then
\[
 \frac{d}{dt}\log\frac{w_e}{w_f}=\kappa_f-\kappa_e\longrightarrow0,
\]
but this alone does not imply that $w_e/w_f$ has a limit.  Indeed,
\[
 \log\frac{w_e(t)}{w_f(t)}
 =\log\frac{w_e(0)}{w_f(0)}
  +\int_0^t\bigl(\kappa_f(s)-\kappa_e(s)\bigr)\,ds.
\]
$w_e(t)/w_f(t)$ converges to a finite positive number if and only if the
signed time integral on the right converges to a finite real number.  This does
not require absolute integrability of $\kappa_f-\kappa_e$.  The dissipation
identity gives only a weighted $L^2$ estimate and, when the conductances
degenerate, does not imply convergence of this time integral.
\end{remark}

\subsection{Combinatorial structure for curvature limits}

We next characterize the limiting curvature vector in terms of the
combinatorics of the tree.
\begin{proof}[\normalfont\bfseries Proof of Theorem~\ref{thm:combinatorial}]
For a fixed vertex $x$, put
\[
                 g_x(A):=\one_{\{E(x)\subseteq A\}}.
\]
We verify that $g_x$ is supermodular.  For any $A,B\subseteq E$,
\begin{equation}\label{eq:gxsupermodular}
 g_x(A)+g_x(B)\le g_x(A\cup B)+g_x(A\cap B).
\end{equation}
Indeed, if $E(x)$ is contained in both $A$ and $B$, both sides are equal to
$2$.  If it is contained in exactly one of them, both sides are equal to $1$.
If it is contained in neither, the left-hand side is zero and the inequality
is immediate.  When $d_x\ge3$, the coefficient $2-d_x$ is negative, so
\eqref{eq:gxsupermodular} shows that $(2-d_x)g_x$ is submodular.  If $x$ is a
leaf, then $E(x)$ consists of one edge and $g_x$ is a modular single-edge
function.  A vertex of degree two contributes zero.  Since $\beta$ is the sum
of these vertex terms, it is submodular.

Let $A\subseteq E$ be nonempty.  Summing the curvature formula over $A$ and
grouping the terms by vertices gives
\begin{equation}\label{eq:subsetcurvature}
 \sum_{e\in A}\kappa_e(t)
 =\sum_{\substack{x\in V\\A\cap E(x)\ne\varnothing}}
 (2-d_x)
 \frac{\displaystyle\sum_{e\in A\cap E(x)}w_e(t)^{-1}}
      {\displaystyle\sum_{f\in E(x)}w_f(t)^{-1}}.
\end{equation}
If $E(x)\subseteq A$, the contribution of $x$ is $2-d_x$.  If $A$ contains
only some of the edges at $x$, then either $d_x=2$ and the contribution is zero,
or $d_x\ge3$ and the contribution is negative.  A leaf cannot occur in the
second case.  Therefore
\begin{equation*}
                    \sum_{e\in A}\kappa_e(t)\le\beta(A).
\end{equation*}
Letting $t\to\infty$ yields $c(A)\le\beta(A)$.  Proposition~\ref{prop:GB}
gives $c(E)=2=\beta(E)$, and therefore $c\in B(\beta)$.

To prove \eqref{eq:tightlevels},
fix $j,$ and we have  
\eqref{eq:subsetcurvature} with $A=A_j.$
Consider a vertex $x$ for
which $A_j\cap E(x)$ is nonempty and is not all of $E(x)$.  The only case that
needs consideration is $d_x\ge3$.  Choose $g\in E(x)\setminus A_j$.  For every
$e\in A_j\cap E(x)$, we have $c_e\le\alpha_j<c_g$, and
\eqref{eq:exponentialrates} gives
\[
 \frac{w_g(t)}{w_e(t)}\longrightarrow0.
\]
Since $D_x(t)\ge w_g(t)^{-1}$,
\[
 \frac{\displaystyle\sum_{e\in A_j\cap E(x)}w_e(t)^{-1}}
      {\displaystyle\sum_{f\in E(x)}w_f(t)^{-1}}
 \le\sum_{e\in A_j\cap E(x)}\frac{w_g(t)}{w_e(t)}
 \longrightarrow0.
\]
Every partial contribution in \eqref{eq:subsetcurvature} vanishes in the
limit.  The full contributions sum to $\beta(A_j)$, proving
\eqref{eq:tightlevels}.

The vector $c$ has the decomposition
\[
 c=\alpha_r\one+
   \sum_{j=1}^{r-1}(\alpha_j-\alpha_{j+1})\one_{A_j}.
\]
Let $z\in B(\beta)$.  Since $z(E)=c(E)$ and, by
\eqref{eq:tightlevels},
\[
                         z(A_j)\le\beta(A_j)=c(A_j),
\]
we obtain
\begin{align*}
 \langle c,z-c\rangle
 &=\alpha_r\bigl(z(E)-c(E)\bigr)\\
 &\quad+
   \sum_{j=1}^{r-1}(\alpha_j-\alpha_{j+1})
                    \bigl(z(A_j)-c(A_j)\bigr)\ge0.
\end{align*}
Consequently,
\[
 \frac12\norm{z}^2-\frac12\norm{c}^2
 =\langle c,z-c\rangle+\frac12\norm{z-c}^2\ge0.
\]
Thus $c$ minimizes the objective in \eqref{eq:minimumnormbase}.  The last
inequality is strict when $z\ne c$, so the minimizer is unique.

For the recursive description, let $A_j\subsetneq A\subseteq E$.  Then
every edge in $A\setminus A_j$ has limiting curvature at least
$\alpha_{j+1}$.  Using the tightness of $A_j$ and the inequality
$c(A)\le\beta(A)$, we obtain
\[
 \alpha_{j+1}(|A|-|A_j|)
 \le c(A)-c(A_j)
 \le\beta(A)-\beta(A_j).
\]
Equality holds when $A=A_{j+1}$, so \eqref{eq:principalchain} follows.  If
another set $A$ attains the same minimum, then equality holds in the first
inequality, and every edge of $A\setminus A_j$ has curvature
$\alpha_{j+1}$.  Hence $A\subseteq A_{j+1}$, which proves the maximality
assertion.

Taking $j=0$ in \eqref{eq:principalchain} gives
\eqref{eq:cstarcombinatorial}, and $A_1=E_*$.  If a nonempty set $A$ attains
this minimum, then
\[
 |A|c_*\le c(A)\le\beta(A)=|A|c_*.
\]
Since every $c_e\ge c_*$, all edges of $A$ belong to $E_*$.  Hence $E_*$ is the
unique largest minimizing set, and \eqref{eq:Estarunion} follows.  Finally, put
$F(A)=\beta(A)-c_*|A|$.  The function $F$ is submodular and nonnegative.  If
$F(A)=F(B)=0$, submodularity gives
\[
                  F(A\cup B)+F(A\cap B)\le F(A)+F(B)=0.
\]
Both terms on the left are nonnegative, so they vanish.  This proves the last
assertion, with the stated qualification when $A\cap B$ is empty.
\end{proof}

\section{Limiting normalized weights}\label{sec:weights}
\subsection{Equations for omega-limit points}\label{subsec:omega-limit-equations}
\begin{proof}[\normalfont\bfseries Proof of Theorem~\ref{thm:algebraiccriterion}]
Choose a sequence $t_j\to\infty$ such that $w(t_j)\to w^*$, and put
$C=E^+(w^*)$ and $p=w^*|_C$.  Then $p\in\Delta_C$.

Let $e\in C$ and let $x$ be one of its endpoints.  If $E(x)\subseteq C$, all
weights incident to $x$ have positive limits along this sequence.  Hence
\[
 \frac{2-d_x}{w_e(t_j)D_x(t_j)}
 \longrightarrow
 \frac{2-d_x}{p_e\displaystyle\sum_{f\in E(x)}p_f^{-1}}.
\]
If $E(x)\not\subseteq C$, choose $g\in E(x)\setminus C$.  Since
$w_g(t_j)\to0$ whereas $w_e(t_j)\to p_e>0$,
\[
 w_e(t_j)D_x(t_j)
 \ge \frac{w_e(t_j)}{w_g(t_j)}\longrightarrow\infty.
\]
In this case the contribution of $x$ tends to zero.  Applying the same argument at
the other endpoint of $e$ gives
\[
                         \kappa_e(t_j)\longrightarrow\kappa_e^C(p).
\]
By Theorem~\ref{thm:kappalimit}, $C\subseteq E_*$ and
$\kappa_e(t_j)\to c_*$ for every $e\in C$.  Therefore
$p\in\mathcal S_C(c_*)$.

For the convergence assertion, let $T\ge0$ and set
\[
 K_T:=\overline{\{w(t):t\ge T\}}\subseteq\overline\Delta_E.
\]
The set $\{w(t):t\ge T\}$ is connected because $w$ is continuous, and therefore
$K_T$ is connected.  The sets $K_T$ are compact and decrease as $T$ increases, and
\[
                         \omega(w)=\bigcap_{T\ge0}K_T.
\]
It follows that $\omega(w)$ is nonempty and connected.  We have already shown that
it is contained in \eqref{eq:candidates}.  If that set is finite, then
$\omega(w)$ must consist of a single point.  This is equivalent to the convergence
of $w(t)$.
\end{proof}

We first describe the solutions on a fixed support.

\begin{proof}[\normalfont\bfseries Proof of Theorem~\ref{thm:boundarystructure}]
Let $p\in\mathcal S_C(c)$.  On summing \eqref{eq:boundarycurvature} over
$e\in C$, the contribution of a vertex $x$ with $E(x)\subseteq C$ is
\[
 \sum_{e\in E(x)}
 \frac{2-d_x}{p_e\displaystyle\sum_{f\in E(x)}p_f^{-1}}
 =2-d_x.
\]
Summing over all such vertices gives \eqref{eq:balanceidentity}.

It remains to describe the solutions with a fixed support.  Write $y_e=\log p_e$
and define
\[
 \Phi_C(y):=
 \sum_{\substack{x\in V\\E(x)\subseteq C}}
 (d_x-2)\log\left(\sum_{e\in E(x)}e^{-y_e}\right).
\]
The term associated with a leaf is affine, and a vertex of degree two contributes
zero.  The other terms are convex.  As in Proposition~\ref{prop:potential}, direct
differentiation gives
\[
                         \nabla\Phi_C(y)=\kappa^C(p).
\]
Let $v\in\R^C$.  At a vertex $x$ of degree at least three, the contribution of the
corresponding term to the Hessian quadratic form is
\[
 (d_x-2)\sum_{\substack{\{e,f\}\subseteq E(x)}}
 \frac{e^{-y_e-y_f}}{D_x(y)^2}(v_e-v_f)^2.
\]
All coefficients in this sum are positive.  Hence this contribution vanishes if
and only if $v_e$ has the same value for all $e\in E(x)$.  Summing over the
vertices that contribute to $\Phi_C$, we obtain
\begin{equation}\label{eq:kernelGC}
 \ker\nabla^2\Phi_C(y)
 =\{v\in\R^C:v\text{ is constant on each }C_i\}.
\end{equation}
Notice that this kernel does not depend on $y$.

Fix one solution $p^0\in\mathcal S_C(c)$.  For each $i$, let
\begin{equation}\label{eq:blockprofilelimit}
 q_e^{(i)}:=\frac{p_e^0}{\sum_{f\in C_i}p_f^0},
 \qquad e\in C_i.
\end{equation}
Every vertex that makes a nonzero contribution to
\eqref{eq:boundarycurvature} has all its incident edges in a single component
$C_i$; at a leaf there is only one incident edge.
Multiplying the weights in each $C_i$ by an arbitrary positive
constant does not change any $\kappa_e^C$.  It follows that every vector on the
right-hand side of \eqref{eq:SCsimplex} belongs to $\mathcal S_C(c)$.

Conversely, let $p\in\mathcal S_C(c)$ and put $y=\log p$ and $y^0=\log p^0$.
Every component of both gradients equals $c$, so their difference is zero.  The
fundamental theorem of calculus gives
\begin{align*}
 0
 &=\left\langle y-y^0,
       \nabla\Phi_C(y)-\nabla\Phi_C(y^0)\right\rangle\\
 &=\int_0^1
   \left\langle y-y^0,
   \nabla^2\Phi_C\bigl(y^0+\tau(y-y^0)\bigr)(y-y^0)
   \right\rangle\,d\tau.
\end{align*}
The integrand is continuous and nonnegative.  It therefore vanishes for every
$\tau\in[0,1]$, and \eqref{eq:kernelGC} shows that $y-y^0$ is constant on each
$C_i$.  Hence $p$ is obtained from $p^0$ by rescaling its restrictions to the
components and then imposing $\sum_{e\in C}p_e=1$.  If $\widehat p^0$ is another
solution, the same argument gives $\widehat p_e^0=r_i p_e^0$ on $C_i$ for some
$r_i>0$.  After normalization on $C_i$,
\[
 \frac{\widehat p_e^0}{\sum_{f\in C_i}\widehat p_f^0}
 =\frac{p_e^0}{\sum_{f\in C_i}p_f^0},
 \qquad e\in C_i.
\]
This proves \eqref{eq:SCsimplex} and shows that each profile
\eqref{eq:blockprofilelimit} is uniquely determined.

Finally, let $w^*\in\omega(w)$ have support $C$ of maximal cardinality, and assume
that $G_C$ is connected.  By Theorem~\ref{thm:algebraiccriterion},
$w^*|_C\in\mathcal S_C(c_*)$, and the part just proved shows that this set consists
of one point.  All coordinates of $w^*$ on $C$ are positive.  Hence, in a sufficiently
small neighborhood of $w^*$, every point of $\omega(w)$ has support containing
$C$.  Maximality of $|C|$ forces that support to equal $C$.  Every such omega-limit
point belongs to $\mathcal S_C(c_*)$, which is a singleton, and hence equals
$w^*$.  Therefore $w^*$ is isolated in
$\omega(w)$.  The proof of Theorem~\ref{thm:algebraiccriterion} showed that
$\omega(w)$ is connected; it follows that $\omega(w)=\{w^*\}$, and hence
$w(t)\to w^*$.  If $C=E_*$, maximality follows from
\eqref{eq:supportminimal}, which proves the last assertion.
\end{proof}

\subsection{Existence of positive boundary solutions}
\begin{proof}[\normalfont\bfseries Proof of Theorem~\ref{thm:positivecriterion}]
Suppose first that
$p\in\mathcal S_C(c)$, and let $A$ be a nonempty proper subset of $C$.  Since
$G_C$ is connected, some vertex $x$ of degree at least three has
$E(x)\subseteq C$ and has incident edges in both $A$ and $C\setminus A$.  When
$\sum_{e\in A}\kappa_e^C(p)$ is grouped by vertices, this $x$ makes a strictly
negative partial contribution.  All other partial contributions are
nonpositive, while the vertices whose incident edges are all in $A$ contribute
$\beta(A)$.  Consequently,
\[
                         |A|c<\beta(A),
\]
which is equivalent to $\lambda(A)>c$.

Conversely, assume that $\lambda(A)>c$ for every nonempty proper
$A\subsetneq C$, and define
\[
 F_C(u)=\Phi_C(u)-c\sum_{e\in C}u_e.
\]
The identity $c=\lambda(C)$ shows that $F_C$ is unchanged when a constant is
added to all coordinates.  We claim that it is coercive on
\[
 H_0^C:=\left\{u\in\R^C:\sum_{e\in C}u_e=0\right\}.
\]
If $|C|=1$, this space consists of one point and the conclusion is immediate.
Assume henceforth that $|C|\ge2$.
Fix $u\in H_0^C$ and put
\[
 m_u:=\min_{e\in C}u_e,
 \qquad \widehat u:=u-m_u\one,
 \qquad M:=\max_{e\in C}\widehat u_e.
\]
For $0\le t<M$, set
\[
                         A_t:=\{e\in C:\widehat u_e>t\}.
\]
Every $A_t$ is a nonempty proper subset of $C$.  Since there are only finitely
many such subsets,
\[
 \delta:=\min_{\varnothing\ne A\subsetneq C}
          \bigl(\beta(A)-c|A|\bigr)>0.
\]
All coordinates of $\widehat u$ are nonnegative and at least one is zero.  For
every vertex appearing in the definition of $\Phi_C$,
\[
 \min_{e\in E(x)}\widehat u_e
 =\int_0^M\one_{\{E(x)\subseteq A_t\}}\,dt,
\]
because $E(x)\subseteq A_t$ precisely when
$t<\min_{e\in E(x)}\widehat u_e$, apart from endpoints of intervals, which do
not affect the integral.  Similarly,
\[
                \sum_{e\in C}\widehat u_e=\int_0^M|A_t|\,dt.
\]
Define
\[
 R(v):=\sum_{\substack{x\in V\\E(x)\subseteq C}}
       (2-d_x)\min_{e\in E(x)}v_e-c\sum_{e\in C}v_e.
\]
Since $\beta(C)=c|C|$, both $F_C$ and $R$ are unchanged when a constant is
added to every coordinate.  Summing the preceding identities therefore gives
\begin{align*}
 R(u)=R(\widehat u)
 &=\int_0^M\bigl(\beta(A_t)-c|A_t|\bigr)\,dt
 \ge\delta M.
\end{align*}
For $d_x\ge3$,
\[
 (d_x-2)\log\left(\sum_{e\in E(x)}e^{-v_e}\right)
 \ge (2-d_x)\min_{e\in E(x)}v_e.
\]
At a leaf the two sides are equal, and a vertex of degree two contributes zero.
It follows that $F_C(v)\ge R(v)$ for every $v\in\R^C$.  Since $u\in H_0^C$,
$\min_eu_e\le0\le\max_eu_e$, and hence
\[
 \max_e|u_e|\le \max_eu_e-\min_eu_e.
\]
Thus a bounded coordinate range would imply that $u$ is bounded.  After
subtracting the minimum coordinate, $M$ is exactly this range, so $M\to\infty$
whenever $\norm{u}\to\infty$ in $H_0^C$.  Together with
$F_C(u)\ge R(u)\ge\delta M$, this proves coercivity.

The restriction of $F_C$ to $H_0^C$ therefore has a minimizer.  It is strictly
convex there by \eqref{eq:kernelGC} and the connectedness of $G_C$, so the
minimizer is unique.  At the minimizer, the directional derivative vanishes in
every direction in $H_0^C$.  Hence the gradient of $F_C$ belongs to
$(H_0^C)^\perp=\operatorname{span}\{\one\}$.  On the other hand, translation
invariance gives $\langle\nabla F_C,\one\rangle=0$.  The gradient is therefore
zero.  Exponentiating
and normalizing the minimizer produces $p\in\Delta_C$ with
$\kappa_e^C(p)=c$ for every $e\in C$.  Thus
$\mathcal S_C(c)\ne\varnothing$; its uniqueness also follows from
Theorem~\ref{thm:boundarystructure}.

For the last assertion, Theorem~\ref{thm:algebraiccriterion} gives
$p:=w^*|_C\in\mathcal S_C(c_*)$, and the balance identity gives
$\lambda(C)=c_*$.  The boundary equations decouple across the components of
$G_C$.  Indeed, if a vertex of degree at least three contributes to
\eqref{eq:boundarycurvature}, then all its incident edges lie in $C$ and are
pairwise adjacent in $G_C$, so they belong to one component.  A leaf has only
one incident edge, and a vertex of degree two contributes zero.  Thus no
nonzero term in the boundary equations involves edges from two different
components.  After normalizing $p|_{C_i}$, scale invariance therefore gives a point of
$\mathcal S_{C_i}(c_*)$, so $\lambda(C_i)=c_*$.  The necessary part just proved
shows that no nonempty proper subset of $C_i$ has the same value of $\lambda$.
The conclusion follows from \eqref{eq:cstarcombinatorial}.
\end{proof}

\begin{proof}[\normalfont\bfseries Proof of Corollary~\ref{cor:Estarcriterion}]
A vertex that makes a
nonzero contribution to $\beta(E_*)$ belongs to exactly one component $C_i$:
this is immediate for a leaf, while the incident edges at a contributing vertex
of degree at least three form a clique in $G_{E_*}$.  Vertices of degree two
contribute zero.  Therefore
\[
                         \beta(E_*)=\sum_{i=1}^s\beta(C_i).
\]
By Theorem~\ref{thm:combinatorial},
$\beta(E_*)=c_*|E_*|$ and $\beta(C_i)\ge c_*|C_i|$.  Equality of the sums forces
\eqref{eq:componentbalance} for every $i$.

The boundary curvature equations on distinct components do not interact.  More
precisely, scale invariance gives
\[
 \mathcal S_{E_*}(c_*)\ne\varnothing
 \quad\Longleftrightarrow\quad
 \mathcal S_{C_i}(c_*)\ne\varnothing
 \quad\text{for every }i.
\]
Applying \eqref{eq:strictcriterion} to each connected component proves
\eqref{eq:componentcriterion}.
\end{proof}

\subsection{Convergence with full minimal support}
\begin{proof}[\normalfont\bfseries Proof of Theorem~\ref{thm:fullsupport}]
Set $C=E_*$.  We use \eqref{eq:boundarycurvature} for arbitrary positive vectors
on $C$, without requiring their coordinates to sum to one; $\kappa^C$ is
invariant under a common rescaling.

Suppose first that $C\ne E$.  Since $E$ is finite,
\[
 \eta:=\min_{g\notin C}(c_g-c_*)>0.
\]
Choose $0<\delta<\eta$.  Theorem~\ref{thm:kappalimit} gives, for every $e\in C$
and $g\notin C$,
\[
 \frac1t\log\frac{\widetilde w_g(t)}{\widetilde w_e(t)}
 \longrightarrow-(c_g-c_*),
\]
and hence, uniformly over the finitely many such pairs,
\begin{equation}\label{eq:outsideexp}
             \frac{\widetilde w_g(t)}{\widetilde w_e(t)}
             =O(e^{-\delta t}).
\end{equation}
For $e\in C$, compare its actual curvature with $\kappa_e^C(\widetilde w_C)$.
The two expressions agree at an endpoint $x$ for which $E(x)\subseteq C$.  If
$E(x)\not\subseteq C$, choose $g\in E(x)\setminus C$.  The contribution of $x$
to the actual curvature satisfies
\[
 \left|\frac{2-d_x}{\widetilde w_eD_x}\right|
 \le |2-d_x|\frac{\widetilde w_g}{\widetilde w_e}
 =O(e^{-\delta t}).
\]
Summing the two endpoint estimates gives
\begin{equation}\label{eq:curvatureerror}
 \kappa_e(t)=\kappa_e^C(\widetilde w_C(t))+\rho_e(t),
 \qquad \norm{\rho(t)}=O(e^{-\delta t}).
\end{equation}
If $C=E$, the two curvature expressions agree identically, and we set
$\rho\equiv0$.  In either case, $\rho(t)\to0$ and
$\int_0^\infty\norm{\rho(t)}\,dt<\infty$.

Remove the common exponential rate by putting
\[
 u_e(t):=\log\widetilde w_e(t)+c_*t,
 \qquad e\in C,
\]
and define the convex function
\[
 F_C(u):=\Phi_C(u)-c_*\sum_{e\in C}u_e.
\]
Since $\nabla\Phi_C=\kappa^C$, equations \eqref{eq:unnormalized} and
\eqref{eq:curvatureerror} give
\begin{equation}\label{eq:perturbedgradient}
             u'=-\nabla F_C(u)+\varepsilon(t),
             \qquad \varepsilon(t):=-\rho(t).
\end{equation}
By assumption, choose $q\in\mathcal S_C(c_*)$ and put $v=\log q$.  Then
$\nabla F_C(v)=0$.

We first show that $u$ is bounded.  For $a>0$, set
\[
 R_a(t):=\bigl(\norm{u(t)-v}^2+a^2\bigr)^{1/2}.
\]
Convexity of $F_C$ and \eqref{eq:perturbedgradient} imply
\begin{align*}
 R_a'(t)
 &=\frac{-\langle u-v,\nabla F_C(u)-\nabla F_C(v)\rangle
          +\langle u-v,\varepsilon\rangle}{R_a(t)}\\
 &\le \norm{\varepsilon(t)}.
\end{align*}
Integrating from $s$ to $t$ and letting $a\downarrow0$ yields
\begin{equation}\label{eq:fejer}
 \norm{u(t)-v}
 \le \norm{u(s)-v}+\int_s^t\norm{\varepsilon(\tau)}\,d\tau,
 \qquad t\ge s.
\end{equation}
This bounds $u(t)$.  By Theorem~\ref{thm:kappalimit},
\[
 \nabla F_C(u(t))
 =\kappa(t)|_C-c_*\one-\rho(t)\longrightarrow0
\]
as $t\to\infty$.  Since $u(t)$ is bounded and $C$ is finite, there exits a sequence $t_j\to\infty$ and a vector $u^\infty$ such that
\[
u(t_j)\to u^\infty.
\]
The gradient is continuous, and hence
\[
 \nabla F_C(u^\infty)
 =\lim_{j\to\infty}\nabla F_C(u(t_j))=0.
\]
Thus $u^\infty$ is a critical point of $F_C$.  Applying \eqref{eq:fejer} with
$v=u^\infty$ gives, for $t\ge t_j$,
\[
 \norm{u(t)-u^\infty}
 \le \norm{u(t_j)-u^\infty}
    +\int_{t_j}^\infty\norm{\varepsilon(\tau)}\,d\tau.
\]
The right-hand side tends to zero as $j\to\infty$.  Hence
$u(t)\to u^\infty$.

Set $A_e=e^{u_e^\infty}>0$ for $e\in C$.  Then
\[
 \widetilde w_e(t)=e^{-c_*t}(A_e+o(1)),
 \qquad e\in C.
\]
If $g\notin C$, fix an edge $e\in C$.  Equations
\eqref{eq:outsideexp} and the preceding asymptotic formula give
\[
 e^{c_*t}\widetilde w_g(t)
 =\frac{\widetilde w_g(t)}{\widetilde w_e(t)}
   \bigl(e^{c_*t}\widetilde w_e(t)\bigr)
 \longrightarrow0.
\]
After normalization, we obtain
\[
 w_e(t)\longrightarrow w_e^\infty:=
 \begin{cases}
 \displaystyle\frac{A_e}{\sum_{f\in C}A_f},&e\in C,\\[6pt]
 0,&e\notin C.
 \end{cases}
\]
Since $\nabla F_C(u^\infty)=0$, scale invariance gives
$w^\infty|_C\in\mathcal S_C(c_*)$.  This proves all assertions except the last.
If an omega-limit point $p$ has support $E_*$, Theorem~\ref{thm:algebraiccriterion}
gives $p|_{E_*}\in\mathcal S_{E_*}(c_*)$, so the result just proved applies; its
limit must equal the given  omega-limit point $p$.
\end{proof}

\begin{example}\label{ex:nestedminimizer}
Let $x$ and $y$ be adjacent vertices with $d_x=3$ and $d_y=4$, and write
$z=xy$.  Join the other two edges at $x$ to degree-two vertices $a_1,a_2$, and
attach a leaf to each $a_i$.  At $y$, join two edges to degree-two vertices
$b_1,b_2$, again followed by leaves, and join the remaining edge directly to a
leaf $q$.  This tree is shown in Figure~\ref{fig:nestedminimizer}.  It is not a
caterpillar, since $x$ has three neighbors in the subgraph induced by the
non-leaf vertices.

Set
\[
 C:=\{z,xa_1,xa_2,yb_1,yb_2,yq\},
 \qquad
 A:=\{z,xa_1,xa_2\}.
\]
Then
\[
 |C|=6,\qquad \beta(C)=(2-d_x)+(2-d_y)+1=-2,
 \qquad \lambda(C)=-\frac13,
\]
where the last $1$ is the contribution of the leaf $q$.  On the other hand,
\[
 |A|=3,\qquad \beta(A)=2-d_x=-1,
 \qquad \lambda(A)=-\frac13.
\]

\begin{figure}[ht]
\centering
\begin{tikzpicture}[
  branch/.style={circle,draw,fill=black,inner sep=1.7pt},
  vertex/.style={circle,draw,fill=white,inner sep=1.5pt},
  leaf/.style={circle,draw,fill=white,inner sep=1.2pt},
  core/.style={line width=1.1pt},
  outer/.style={line width=0.45pt}]
\node[branch,label=below:$x$] (x) at (-1.2,0) {};
\node[branch,label=below:$y$] (y) at (1.2,0) {};
\node[vertex,label=left:$a_1$] (a1) at (-2.45,1.15) {};
\node[vertex,label=left:$a_2$] (a2) at (-2.45,-1.15) {};
\node[leaf] (al1) at (-3.65,1.65) {};
\node[leaf] (al2) at (-3.65,-1.65) {};
\node[vertex,label=right:$b_1$] (b1) at (2.45,1.15) {};
\node[vertex,label=right:$b_2$] (b2) at (2.45,-1.15) {};
\node[leaf] (bl1) at (3.65,1.65) {};
\node[leaf] (bl2) at (3.65,-1.65) {};
\node[leaf,label=above:$q$] (q) at (1.2,1.65) {};
\draw[core] (x)--node[above] {$z$} (y);
\draw[core] (x)--(a1);
\draw[core] (x)--(a2);
\draw[outer] (a1)--(al1);
\draw[outer] (a2)--(al2);
\draw[core] (y)--(b1);
\draw[core] (y)--(b2);
\draw[outer] (b1)--(bl1);
\draw[outer] (b2)--(bl2);
\draw[core] (y)--(q);
\end{tikzpicture}
\caption{A non-caterpillar with a proper nested minimizing edge set.  The thick
edges form $C$, and the three thick edges incident to $x$ form $A$.}
\label{fig:nestedminimizer}
\end{figure}

We check that no nonempty edge set $B$ has $\lambda(B)<-1/3$.  The only negative
terms in $\beta(B)$ can come from $x$ and $y$.  If neither $E(x)$ nor $E(y)$ is
contained in $B$, then $\beta(B)\ge0$.  If only $E(x)$ is contained in $B$, then
$\beta(B)\ge-1$ and $|B|\ge3$.  If only $E(y)$ is contained in $B$, the edge
$yq$ also includes the leaf contribution at $q$, so $\beta(B)\ge-1$ and
$|B|\ge4$.  If both are contained in $B$, then $\beta(B)\ge-2$ and $|B|\ge6$.
In every case $\lambda(B)\ge-1/3$.  Theorem~\ref{thm:combinatorial} therefore
gives
\[
                              c_*=-\frac13.
\]
Both $A$ and $C$ attain the minimum, so $C\subseteq E_*$.  Every edge outside
$C$ joins a degree-two vertex to a leaf and consequently has curvature identically
equal to $1$.  Hence $E_*=C$.

The graph $G_C$ is connected, but the proper subset $A\subsetneq C$ satisfies
$\lambda(A)=\lambda(C)=c_*$.  The strict criterion
\eqref{eq:strictcriterion} now gives
\[
                         \mathcal S_{E_*}(c_*)=\varnothing.
\]
In particular, $c_*<0$ does not imply the hypothesis of Theorem~\ref{thm:fullsupport},
even for a non-caterpillar tree.
\end{example}

\subsection{Comparison of trajectories}

\begin{proof}[\normalfont\bfseries Proof of Theorem~\ref{thm:trajectorycomparison}]
Put
\[
 y=\log\widetilde w,\qquad z=\log\widetilde v,
 \qquad h=z-y.
\]
By Proposition~\ref{prop:potential},
\[
 h'=-\bigl(\nabla\Phi(z)-\nabla\Phi(y)\bigr)=-B(t)h,
 \qquad
 B(t):=\int_0^1\nabla^2\Phi\bigl(y+\tau h\bigr)\,d\tau.
\]
Each Hessian in the integral is a symmetric weighted Laplacian.  Hence $B(t)$ is
also a symmetric weighted Laplacian, say
\[
 (B(t)a)_e=\sum_{f\ne e}b_{ef}(t)(a_e-a_f),
 \qquad b_{ef}(t)=b_{fe}(t)\ge0.
\]
Set $H_{\max}(t)=\max_e h_e(t)$.  At a differentiability point of
$H_{\max}$, choose an edge $e$ with $h_e=H_{\max}$.  Then
\[
 H_{\max}'(t)=h_e'(t)
 =\sum_{f\ne e}b_{ef}(t)(h_f(t)-h_e(t))\le0.
\]
Since $H_{\max}$ is locally absolutely continuous, it is nonincreasing.  The
same argument at an edge attaining the minimum shows that
$H_{\min}(t):=\min_e h_e(t)$ is nondecreasing.  In particular, every $h_e$ is
bounded between $H_{\min}(0)$ and $H_{\max}(0)$.

For $1\le i\le m=|E|$, set
\[
 u_i(t):=
 \min_{\substack{I\subseteq E\\ |I|=i}}
 \max_{e\in I}h_e(t),
 \qquad
 Q_i(t):=
 \min_{\substack{I\subseteq E\\ |I|=i}}
 \sum_{e\in I}h_e(t),
\]
and put $Q_0\equiv0$.  For each fixed $t$, $u_i(t)$ is the
$i$-th smallest value among $\{h_e(t):e\in E\}$.  Therefore
\[
 Q_i(t)=\sum_{j=1}^i u_j(t),
 \qquad
 u_i(t)=Q_i(t)-Q_{i-1}(t).
\]


Each $Q_i$ is locally Lipschitz.  At a differentiable
point $t$ of $Q_i$, choose a minimizing set $I$.  If
$\psi_I=\sum_{e\in I}h_e$, then $\psi_I-Q_i\ge0$ and this difference vanishes
at the chosen time.  Hence $Q_i'=\psi_I'$ there.  The minimality of $I$ also
implies
\[
                         h_e(t)\le h_f(t)
                         \qquad(e\in I,\ f\notin I),
\]
since otherwise exchanging $e$ and $f$ would decrease $\psi_I(t)$.  Using
$h_e'=\sum_{f\ne e}b_{ef}(h_f-h_e)$, we have
\[
 Q_i'(t)
 =\sum_{\substack{e\in I\\f\notin I}}
   b_{ef}(t)\bigl(h_f(t)-h_e(t)\bigr)\ge0.
\]
Thus $Q_i'\ge0$ almost everywhere.  Since $Q_i$ is locally
absolutely continuous, it is nondecreasing.  Moreover,
\[
 iH_{\min}(0)\le Q_i(t)\le iH_{\max}(0),
\]
so $Q_i$ has a finite limit.  Since
\[
 u_i(t)=Q_i(t)-Q_{i-1}(t),
\]
each $u_i$ also has a finite limit.  Lemma~\ref{lem:orderedlimits}
then implies that $h_e(t)$ has a finite limit for every $e\in E$.
Exponentiating gives
\[
 \frac{\widetilde v_e(t)}{\widetilde w_e(t)}=e^{h_e(t)}
 \longrightarrow r_e\in(0,\infty).
\]

Let $w$ and $v$ be the normalizations of $\widetilde w$ and $\widetilde v$,
respectively, and put $a(t)=(e^{h_e(t)})_{e\in E}$.  Then $a(t)\to r$ and
\[
                         v(t)=T_{a(t)}(w(t)).
\]
Since $r_e>0$ for every $e$, the vectors $a(t)$ eventually lie in a compact
subset of $(0,\infty)^E$.  The map
\[
 (a,p)\longmapsto T_a(p)
\]
is uniformly continuous on the product of this compact set with
$\overline\Delta_E$.  Hence
\[
                         \norm{v(t)-T_r(w(t))}\longrightarrow0.
\]

If $p\in\omega(w)$, choose $t_j\to\infty$ with $w(t_j)\to p$.  Then
$v(t_j)\to T_r(p)$, and therefore
$T_r(\omega(w))\subseteq\omega(v)$.  Conversely, if $q\in\omega(v)$, choose
$t_j\to\infty$ with $v(t_j)\to q$.  Compactness of $\overline\Delta_E$ gives,
after passing to a subsequence, $w(t_j)\to p$ for some $p\in\omega(w)$.  The same
limit now gives $q=T_r(p)$.  This proves \eqref{eq:omegaprojective}.

The inverse of $T_r$ is $T_{r^{-1}}$, so $T_r$ is a homeomorphism.  Since all
$r_e$ are positive, $E^+(T_r(p))=E^+(p)$.  The remaining assertions follow.
\end{proof}

\subsection{Examples and the remaining problem}
\begin{example}\label{ex:cuttingedge}
Fix $n\ge3$.  For each $\sigma\in\{-,+\}$, take a vertex $h_\sigma$ of degree
$n$, join it to vertices $q_\sigma,v_{\sigma,1},\ldots,v_{\sigma,n-1}$, and
attach a leaf $p_{\sigma,j}$ to each $v_{\sigma,j}$.  Finally, join $q_-$ to
$q_+$ by an edge $z$.  Every $q_\sigma$ and $v_{\sigma,j}$ has degree two.
Set
\[
 C_\sigma:=\{h_\sigma q_\sigma\}
   \cup\{h_\sigma v_{\sigma,j}:1\le j\le n-1\},
 \qquad
 L_\sigma:=\{v_{\sigma,j}p_{\sigma,j}:1\le j\le n-1\}.
\]
The tree is shown in Figure~\ref{fig:cuttingedge}.

\begin{figure}[ht]
\centering
\begin{tikzpicture}[
  hub/.style={circle,draw,fill=black,inner sep=1.7pt},
  vertex/.style={circle,draw,fill=white,inner sep=1.5pt},
  leaf/.style={circle,draw,fill=white,inner sep=1.2pt},
  core/.style={line width=1.1pt},
  vanishing/.style={line width=0.45pt},
  cutting/.style={densely dashed,line width=1.1pt}]
\node[hub,label=above:$h_-$] (hm) at (-3,0) {};
\node[vertex,label=above:$q_-$] (qm) at (-1.35,0) {};
\node[vertex,label=above:$q_+$] (qp) at (1.35,0) {};
\node[hub,label=above:$h_+$] (hp) at (3,0) {};

\node[vertex] (vm1) at (-4.25,1.05) {};
\node[leaf]   (pm1) at (-5.35,1.45) {};
\node at (-4.3,0) {$\vdots$};
\node[vertex] (vmn) at (-4.25,-1.05) {};
\node[leaf]   (pmn) at (-5.35,-1.45) {};

\node[vertex] (vp1) at (4.25,1.05) {};
\node[leaf]   (pp1) at (5.35,1.45) {};
\node at (4.3,0) {$\vdots$};
\node[vertex] (vpn) at (4.25,-1.05) {};
\node[leaf]   (ppn) at (5.35,-1.45) {};

\draw[core] (hm)--(qm);
\draw[core] (hp)--(qp);
\draw[cutting] (qm)--node[above] {$z$} (qp);
\draw[core] (hm)--(vm1);
\draw[core] (hm)--(vmn);
\draw[core] (hp)--(vp1);
\draw[core] (hp)--(vpn);
\draw[vanishing] (vm1)--(pm1);
\draw[vanishing] (vmn)--(pmn);
\draw[vanishing] (vp1)--(pp1);
\draw[vanishing] (vpn)--(ppn);

\node[above right] at (vm1) {$v_{-,1}$};
\node[below right] at (vmn) {$v_{-,n-1}$};
\node[above left] at (vp1) {$v_{+,1}$};
\node[below left] at (vpn) {$v_{+,n-1}$};
\node[above left] at (pm1) {$p_{-,1}$};
\node[below left] at (pmn) {$p_{-,n-1}$};
\node[above right] at (pp1) {$p_{+,1}$};
\node[below right] at (ppn) {$p_{+,n-1}$};
\end{tikzpicture}
\caption{The tree in Example~\ref{ex:cuttingedge}.  The thick edges form
$C_-\cup C_+$ and have positive normalized limits.  The dashed internal edge
$z$ and the thin leaf edges have zero normalized limits.}
\label{fig:cuttingedge}
\end{figure}

Choose positive numbers $a_-,a_+,z_0$ and $b_e$, $e\in L_-\cup L_+$, and
prescribe the unnormalized initial metric by
\[
 \widetilde w_e(0)=a_\sigma\quad(e\in C_\sigma),
 \qquad
 \widetilde w_z(0)=z_0,
 \qquad
 \widetilde w_e(0)=b_e\quad(e\in L_-\cup L_+).
\]
Put $\gamma=(n-2)/n$.  If the edges in $C_\sigma$ have a common weight, then
their curvatures are equal.  Uniqueness of the flow therefore shows that these
edges retain a common weight.
Since $D_{h_\sigma}=n/\widetilde w_e$ for $e\in C_\sigma$, the curvature
formula gives, for all $t\ge0$,
\[
 \kappa_e=-\gamma\quad(e\in C_-\cup C_+),
 \qquad
 \kappa_z=0,
 \qquad
 \kappa_e=1\quad(e\in L_-\cup L_+).
\]
The flow equations give
\[
 \widetilde w_e(t)=a_\sigma e^{\gamma t}\quad(e\in C_\sigma),
 \qquad
 \widetilde w_z(t)=z_0,
 \qquad
 \widetilde w_e(t)=b_e e^{-t}\quad(e\in L_-\cup L_+).
\]
Writing $w=\widetilde w/\sum_f\widetilde w_f$, we obtain
\[
 \lim_{t\to\infty}w_e(t)
 =\frac{a_\sigma}{n(a_-+a_+)}>0\quad(e\in C_\sigma),
 \qquad
 \lim_{t\to\infty}w_z(t)=0,
 \qquad
 \lim_{t\to\infty}w_e(t)=0\quad(e\in L_-\cup L_+).
\]
Here $c_*=-\gamma$, and the decay rates are
\[
 \lim_{t\to\infty}\frac1t\log w_z(t)=-\gamma=c_*-c_z,
 \qquad
 \lim_{t\to\infty}\frac1t\log w_e(t)=-(1+\gamma)=c_*-c_e
 \quad(e\in L_-\cup L_+),
\]
in agreement with Theorem~\ref{thm:kappalimit}.  Removing $z$ separates the two
positive sets $C_-$ and $C_+$.  Hence $z$ is an internal cutting edge whose
normalized weight tends to zero.
\end{example}

This example also shows why the limiting curvature equations need not have a unique
solution.  For every $0<\theta<1$, define
\[
 p_e^{\theta}=\frac{\theta}{n}\quad(e\in C_-),
 \qquad
 p_e^{\theta}=\frac{1-\theta}{n}\quad(e\in C_+).
\]
Then $p^\theta\in\mathcal S_{C_-\cup C_+}(-\gamma)$.  The equations determine
the equal weights inside each $C_\sigma$, but not the proportion of the total mass
carried by the two sets.  For the trajectory above this proportion is fixed by the
initial metric: $\theta=a_-/(a_-+a_+)$.  Different initial metrics on
the same tree can have different normalized limits.  This does not contradict
convergence for a fixed initial metric: once the initial metric is fixed, so is the
value of $\theta$ selected by this trajectory.

The preceding results leave the following question open.

\begin{problem}\label{prob:weightconvergence}
Does the normalized inverse-weight Ricci flow converge for every positive initial
metric on every finite tree?
\end{problem}

By Theorem~\ref{thm:trajectorycomparison}, for a fixed tree it is enough to prove
convergence for one positive initial metric.  The convergence of the edge
curvatures does not settle the question.  Indeed, for $e,f\in E_*$,
\[
 \frac{d}{dt}\log\frac{w_e(t)}{w_f(t)}
 =\kappa_f(t)-\kappa_e(t)\longrightarrow0,
\]
but this does not imply convergence of the time integral, and hence does not by
itself imply convergence of the weight ratio.

\par\medskip
\noindent\textbf{Acknowledgements.}
S. Bai is supported by NSFC grant no.~12301434. B. Hua is supported by NSFC grant
no.~12371056.

\par\medskip
\noindent\textbf{Statement of AI Use.}
 The authors formulated the problem and the mathematical results,
 independently verified all arguments and references, and made all final decisions concerning the
 manuscript. GPT-5.6 Sol was used as a supporting tool for English-language and
 LATEX editing, manuscript organization, and discussion of the presentation of a proof argument,
 as well as for preliminary checking of calculations. The authors take full responsibility for the
 content.

\bibliographystyle{amsplain}
\bibliography{references,references_additions}

\end{document}